\numberwithin{equation}{section}
\newcommand{\e}{\varepsilon}
\renewcommand{\k}{\kappa}
\newcommand{\g}{\gamma}
\newcommand{\G}{\Gamma}
\newcommand{\s}{\sigma}
\renewcommand{\t}{\tau}
\renewcommand{\phi}{\varphi}
\newcommand{\N}{\mathbb{N}}
\renewcommand{\S}{\mathbb{S}}
\def\R{\mathbb{R}}
\def\H{\mathcal{H}}
\def\cF{\mathcal{F}}
\DeclareMathOperator{\dist}{dist}
\DeclareMathOperator{\interior}{int}
\DeclareMathOperator{\reach}{reach}
\DeclareMathOperator{\inr}{inr}
\newcommand{\de}{\partial}
\newcommand{\Om}{\Omega}
\theoremstyle{plain}
\newtheorem{thm}{Theorem}[section]
\newtheorem*{thm*}{Theorem}
\newtheorem{lem}[thm]{Lemma}
\newtheorem{prop}[thm]{Proposition}
\newtheorem{cor}[thm]{Corollary}
\theoremstyle{definition}
\newtheorem{defin}[thm]{Definition}
\theoremstyle{remark}
\newtheorem{rem}[thm]{Remark}
\numberwithin{equation}{section}
\title[Prescribed curvature minimizers in a Jordan domain]{Minimizers of the prescribed curvature functional in a Jordan domain with no necks}
\author{Gian Paolo Leonardi}
\address{Dipartimento di Matematica, via Sommarive 14, IT-38123 Povo - Trento (Italy)}
\email{gianpaolo.leonardi@unitn.it}
\author{Giorgio Saracco}
\address{Dipartimento di Matematica, via Ferrata 5, IT-27100 Pavia - Pavia (Italy)}
\email{giorgio.saracco@unipv.it}
\thanks{G.~P.~L.~and G.~S.~have been partially supported by the INdAM--GNAMPA Project 2019 ``Problemi isoperimetrici in spazi Euclidei e non'' (n.~prot.~U-UFMBAZ-2019-000473 11-03-2019).}
\subjclass[2010]{Primary: 49Q10. Secondary: 35J93, 49Q20}
\keywords{perimeter minimizer, prescribed mean curvature, Cheeger constant}
\begin{document}

\definecolor{ffffff}{rgb}{1.,1.,1.}
\definecolor{cqcqcq}{rgb}{0.75,0.75,0.75}

\begin{abstract}
We provide a geometric characterization of the minimal and maximal minimizer of the prescribed curvature functional $P(E)-\k |E|$ among subsets of a Jordan domain $\Om$ with no necks of radius $\k^{-1}$, for values of $\k$ greater than or equal to the Cheeger constant of $\Om$.  As an application, we describe all minimizers of the isoperimetric profile for volumes greater than the volume of the minimal Cheeger set, relative to a Jordan domain $\Om$ which has no necks of radius $r$, for all $r$. Finally, we show that for such sets and volumes the isoperimetric profile is convex.
\end{abstract}

 \hspace{-3cm}
 {
 \begin{minipage}[t]{0.6\linewidth}
 \begin{scriptsize}
 \vspace{-3cm}
 This is a pre-print of an article published in \emph{ESAIM Control Optim. Calc. Var.}. The final authenticated version is available online at: \href{https://doi.org/10.1051/cocv/2020030}{https://doi.org/10.1051/cocv/2020030}
 \end{scriptsize}
\end{minipage} 
}

\maketitle

\section{Introduction}

The existence and the study of properties of hypersurfaces in $\R^n$, with mean curvature given by some prescribed function $g\colon \R^n \to \R$, are classical problems in geometric analysis and in Calculus of Variations, see e.g.~\cite{Gia73, Gia74, Giu78, GN12, GMT81, GR10, Hu91, Mas74, MR10, TW83, Yau82} and the references therein. In the setting of oriented boundaries, the variational approach to the prescribed mean curvature problem is based on the minimization of the functional
\begin{equation}\label{eq:pmc_g}
\cF_g[F]= P(F) - \int_F g\, \textrm{d}x,
\end{equation}
where $P(F)=P(F; \R^n)$ is the total perimeter, intended in the $BV$ framework (see~\cite{AFP00book, Mag12book}). The function $g$ that shows up in~\eqref{eq:pmc_g} plays the role of a prescribed mean curvature, in the sense that any smooth critical point $F$ for $\cF_g$ satisfies $H_F(x) = g(x)$ at any $x\in \de F$, where $H_F(x)$ is the mean curvature of $\de F$ at $x$. A nice introduction to the problem in $\R^2$ and $\R^3$ is available in~\cite{BCG02}. When $g\ge 0$, the minimization of the functional~\eqref{eq:pmc_g} is tied to the weighted isoperimetric problem with volume density given by $g$: any minimizer $E$ of~\eqref{eq:pmc_g} is as well a perimeter minimizer among all sets $F$ that have the same ``weighted volume'' of $E$, i.e.~$\int_F g = \int_E g$. Some results in this setting have been obtained for instance in~\cite{ABCMP17, ABCMP19a, PS19, PS18} with in mind applications such as Hardy--Sobolev inequalities~\cite{CRO13, Csa15}, capillarity~\cite{FG79a, FG79b, Giu78, LS18a}, and even politics~\cite{FLSS18, SS19}.\par
In this paper we are interested in studying the structure of minimizers of~\eqref{eq:pmc_g} when $g$ is a positive constant, among subsets of an open, bounded set $\Om \subset \R^2$. Specifically, for a given positive constant $\k$ we consider the minimization of the functional
\begin{equation}\label{eq:pmc}
\cF_\k[F]= P(F) - \k|F|
\end{equation}
among measurable sets $F\subset \Om$, where $|\cdot|$ denotes the $2$-dimensional Lebesgue measure. 
It is well known that the internal boundary $\de E_\k \cap \Om$ of any nontrivial minimizer $E_\k$ of~\eqref{eq:pmc} is smooth and made of an at most countable union of circular arcs with curvature equal to $\k$. Existence of minimizers of~\eqref{eq:pmc} follows from the Direct Method of the Calculus of Variations, see~\cite[Section~12.5]{Mag12book}, but it may happen that the minimum is achieved by the empty set. A special value of $\k$ is given by the \emph{Cheeger constant} of $\Om$, defined as
\[
h_\Om= \inf \left\{\, \frac{P(F)}{|F|}: |F|>0\,, F\subseteq \Om  \,\right\},
\]
and any nontrivial set $E$ attaining the infimum is called \emph{Cheeger set} of $\Om$. The computation of the constant $h_\Om$ and the characterization of the Cheeger sets of $\Om$ are referred to as the \emph{Cheeger problem}. The existence of Cheeger sets is well known, see for instance~\cite{Leo15, Par11, PS17, Sar18}. Clearly, any Cheeger set $E$ is a nontrivial minimizer of~\eqref{eq:pmc} for the choice $\k=h_\Om$, i.e.~of
\begin{align*}
\cF_{h_\Om}[F]= P(F) - h_\Om|F|.
\end{align*}
Notice that $\min \cF_{h_\Om} = 0$ and that $\min \cF_{\k} \le \cF_\k[\emptyset]=0$, for all $\k>0$. On the one hand, if $\k> h_\Om$, one has
\[
\min \cF_\k \le P(E)-\k|E| < P(E) -h_\Om|E| = 0,
\]
where $E$ is a Cheeger set of $\Om$; this shows that $\cF_\k$ admits nontrivial minimizers. On the other hand, if $\min \cF_\k \ge 0$, then $P(F)|F|^{-1} \ge \k$ for all subset $F\subseteq \Om$ such that  $|F|>0$, hence by taking the infimum one finds that $\k \le h_\Om$. Therefore, the unique minimizer of~\eqref{eq:pmc} whenever the strict inequality $\k<h_\Om$ holds is the empty set. In the equality case, both the empty set and the Cheeger sets of $\Om$ solve~\eqref{eq:pmc}; in this limiting case, we shall always consider the nontrivial minimizers.\par

The Cheeger problem has been widely studied in the past, due to its deep connections with other problems ranging from eigenvalue estimates to capillarity. Several authors addressed the question about how to characterize and efficiently compute the value of the Cheeger constant $h_\Om$. The known results in this direction are essentially limited to the planar setting, as they heavily rely on the rigid characterization of curves with constant curvature in the plane. In particular, under the assumption that $\Om$ is convex~\cite{KL06} or a strip~\cite{LP16} it has been proved that the Cheeger set of $\Om$ is unique and precisely characterized from the geometric viewpoint. If we denote by $\Om^r$ the \emph{inner parallel set} at distance $r$, i.e.
\[
\Om^r = \{\,x\in \Om: \dist(x; \de \Om) \ge r\,\}\,,
\]
then the unique Cheeger set $E$ of $\Om$ is given by the Minkowski sum $\Om^r\oplus B_r$, where $r=h_\Om^{-1}$. Equivalently, the Cheeger set $E$ agrees with the union of all balls of radius $r$ contained in $\Om$. Moreover, the \emph{inner Cheeger formula} holds, i.e.~the radius $r$ is the unique positive solution of the equation 
\[
\pi \rho^2 = |\Om^\rho|.
\]
This formula and this kind of structure for planar Cheeger sets have been recently extended in~\cite{LNS17} to a class of planar domains that is essentially the largest possible. Before recalling the statement of the general structure theorem, we need to introduce the following definition of \emph{no necks of radius $r$} for $r\in (0, \inr(\Om)]$, where $\inr(\Omega)$ stands for the \emph{inradius} of $\Omega$. 

\begin{defin}\label{def:no_necks}
A set $\Om$ has \emph{no necks of radius $r$}, with $r\in(0, \inr(\Om)]$ if the following condition holds. If $B_r(x_0)$ and $B_r(x_1)$ are two balls of radius $r$ contained in $\Om$, then there exists a continuous curve $\g\colon [0,1]\to \Om$ such that 
\[
\g(0)=x_0, \qquad \g(1)=x_1, \qquad B_r(\g(t)) \subset \Om,\quad \forall t\in [0,1].
\]
\end{defin}

We remark that having no necks of radius $r_1$ does not imply the same property for any radius $r_2<r_1$.\par

Whenever a set has no necks of radius $r=h_\Om^{-1}$, then its (maximal) Cheeger set agrees with the union of all balls of radius $r$ contained in $\Om$, analogously to what happens for convex sets and strips. This remarkable fact was proved in~\cite{LNS17}, and we recall the theorem below.

\begin{thm}[Theorem~1.4 and Remark~5.2 of~\cite{LNS17}]\label{thm:LNS}
Let $\Om$ be a Jordan domain such that  $|\de \Om|=0$. If $\Om$ has no necks of radius $r=h_\Om^{-1}$, then the maximal Cheeger set $E$ of $\Om$ is given by
\[
E = \Om^r\oplus B_r\,,
\]
i.e.~the Minkowski sum of $\Om^r$ and $B_r$. Moreover, $r$ is the unique positive solution of
\begin{equation}\label{eq:inner_Cheeger_formula}
\pi \rho^2 = |\Om^\rho|\,.
\end{equation}
Finally, if $\Om^r = \overline{\interior{\Om^r}}$, then $E$ is the unique Cheeger set of $\Om$.
\end{thm}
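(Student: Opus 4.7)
The plan is to proceed in four stages: construct the maximal Cheeger set $E$; analyze the regularity of $\de E\cap\Om$; identify $E$ with $\Om^r\oplus B_r$ via the no-necks hypothesis; and finally derive the inner Cheeger formula together with the uniqueness assertion.

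For the first stage, the submodularity inequality $P(E_1\cup E_2)+P(E_1\cap E_2)\le P(E_1)+P(E_2)$, together with the additivity of Lebesgue measure on disjoint unions, implies that finite unions of Cheeger sets of $\Om$ are again Cheeger, so that a standard volume-maximizing argument yields a maximal Cheeger set $E$. For the second stage, since any Cheeger set minimizes $\cF_{h_\Om}$, the regularity theory for $\Lambda$-minimizers of the perimeter gives that $\de E\cap\Om$ is a locally finite disjoint union of $C^{1,1}$ circular arcs of curvature $h_\Om=1/r$, each curving away from $E$.

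The third and central stage is to show $E=\Om^r\oplus B_r$ by double inclusion. For the inclusion $\Om^r\oplus B_r\subseteq E$, pick any $x\in\Om^r$; an osculating disc to any free arc of $\de E$ furnishes a point $x_0\in\Om$ with $B_r(x_0)\subseteq E$, and the no-necks hypothesis supplies a continuous curve $\g\colon[0,1]\to\Om$ joining $x_0$ to $x$ with $B_r(\g(t))\subseteq\Om$ for every $t$. Setting
\[
T=\bigl\{\,t\in[0,1]\,:\,B_r(\g(t))\subseteq E\,\bigr\},
\]
one has $0\in T$, and I would argue that $T$ is both open and closed in $[0,1]$, hence $T=[0,1]$ and so $B_r(x)\subseteq E$. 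This relies on a comparison principle between circles of equal radius $r$: two distinct such circles cannot cross transversally, so as $t$ varies continuously $\de B_r(\g(t))$ cannot cut through a free arc of $\de E$; combined with the fact that the no-necks path prevents $B_r(\g(t))$ from escaping through $\de\Om$, this forces $B_r(\g(t))\subseteq E$ throughout. The reverse inclusion $E\subseteq\Om^r\oplus B_r$ comes from the fact that, thanks to the regularity of the free boundary, every point of $E$ lies in the closure of an osculating disc $B_r(y)\subseteq E\subseteq\Om$, so that $y\in\Om^r$ and the point belongs to $\Om^r\oplus B_r$.

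Finally, substituting $E=\Om^r\oplus B_r$ into the Cheeger identity $rP(E)=|E|$ and using the Minkowski--Steiner-type formulas $P(\Om^r\oplus B_r)=P(\Om^r)+2\pi r$ and $|\Om^r\oplus B_r|=|\Om^r|+rP(\Om^r)+\pi r^2$, a direct cancellation produces $\pi r^2=|\Om^r|$, which is the inner Cheeger formula~\eqref{eq:inner_Cheeger_formula}; the strict monotonicity of $\rho\mapsto|\Om^\rho|-\pi\rho^2$ guarantees uniqueness of the positive root. The uniqueness statement in the final clause follows because, under the density assumption $\Om^r=\overline{\interior\Om^r}$, any Cheeger set $E'$---which by maximality satisfies $E'\subseteq E$---must contain each osculating disc of $\de E$, so $E'=E$. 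The main obstacle in the above plan is clearly the openness step for $T$: one must rule out the pathological scenario in which $\de B_r(\g(t))$ becomes tangent to a free arc of $\de E$ along a whole sub-arc and then pulls away on the ``wrong'' side; the equal-curvature comparison, coupled with the no-necks confinement, is precisely what is needed to close this loophole.
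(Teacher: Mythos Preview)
Your outline gets the forward inclusion $\Om^r\oplus B_r\subseteq E$ right in spirit---this is the rolling-ball argument (Lemma~\ref{lem:rollingball})---though the phrasing ``circles of equal radius cannot cross transversally'' is false as stated; the correct statement is that a free arc of curvature $1/r$ tangent to $\de B_r(\g(t))$ must coincide with it locally. Note also that the initial ball $B_r(x_0)\subseteq E$ is not obtained from an osculating disc on a free arc (there may be no free arc at all, e.g.\ if $E=\Om$) but from the viscous curvature bound of Proposition~\ref{prop:palla_interna} combined with~\cite[Theorem~1.6]{LNS17}.

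The real gap is the reverse inclusion $E\subseteq\Om^r\oplus B_r$. You assert that ``every point of $E$ lies in the closure of an osculating disc $B_r(y)\subseteq E$'', but this is precisely the hard content of the theorem, not a consequence of free-boundary regularity. Points of $E$ near $\de E\cap\de\Om$---where $\de\Om$ is merely a Jordan curve with no smoothness whatsoever---have no a priori reason to lie inside a disc of radius $r$ contained in $E$; establishing this requires the cut-locus and focal-point machinery of~\cite[Theorem~1.4]{LNS17}, which the present paper explicitly flags as ``much more technical'' and does not reproduce (see the proof of Theorem~\ref{thm:main}). Your proposal supplies no argument for this step. Two further points: the Steiner formulas you invoke need $\reach(\Om^r)\ge r$, itself a nontrivial lemma (\cite[Lemma~5.1]{LNS17}, Lemma~\ref{lem:reach} here); and your uniqueness argument (``$E'$ must contain each osculating disc of $\de E$'') is unjustified---the paper instead characterizes the minimal minimizer via Proposition~\ref{prop:struttura_diff} and shows that $\Om^r=\overline{\interior(\Om^r)}$ forces $\G^1_r=\G^2_r=\emptyset$, hence $E^m_\k=E^M_\k$.
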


We remark that $|\de \Om|$ is the $2$-dimensional Lebesgue measure of $\de \Om$, thus sets whose boundary is a plane-filling curve \`a la Knopp--Osgood (see~\cite{Sag94}) are not covered by the theorem. While it is unclear whether the hypothesis $|\de \Om|=0$ is necessary, the other hypothesis of topological flavor, i.e.~that $\Om$ is a Jordan domain, and the assumption of no necks of radius $h^{-1}_\Om$, must be required, otherwise one can produce counterexamples (see~\cite{LS18b, LNS17}). While uniqueness is not always granted in this more general setting, one can speak of \textit{the} 
\emph{maximal} Cheeger set because the class of Cheeger sets closed under countable unions: one can define a maximal Cheeger set (see Definition~\ref{def:min_max}) and prove its uniqueness (see Proposition~\ref{prop:min_max}).\par

In this paper we show that an analogous result to Theorem~\ref{thm:LNS} holds for nontrivial minimizers of the prescribed curvature functional $\cF_\k$. Specifically, in Theorem~\ref{thm:main}  we show that if a Jordan domain $\Om$ with $|\de\Om|=0$ has no necks of radius $r=\k^{-1}$, then the maximal minimizer $E^M_\k$ of $\cF_\k$ is given by $E^M_\k = \Om^r\oplus B_r$. Moreover, thanks to a careful study of the set $\Om^r \setminus \overline{\interior(\Om^r)}$, see Proposition~\ref{prop:struttura_diff}, we are able to give a precise geometric description of the unique minimal minimizer $E^m_\k$ of $\cF_\k$ and therefore to completely characterize the cases when uniqueness is granted (for the definition of minimal minimizer, we refer the reader to Definition~\ref{def:min_max}).\par

Once these characterizations are proved, we are able to describe \emph{all} possible minimizers of $\cF_\k$ by suitably ``interpolating'' between $E^m_\k$ and $E^M_\k$, and consequently we show that there exists a minimizer $E_\k$ of $\cF_\k$ such that $|E_\k|=V$, for any prescribed volume $V$ between $|E^m_\k|$ and $|E^M_\k|$. In Theorem~\ref{thm:m_iso} we apply this fact to the isoperimetric problem in a Jordan domain $\Om$ with $|\de \Om|=0$ that has no necks of radius $r$, for all $r\le h^{-1}_\Om$. For such an $\Om$ we can fully describe the isoperimetric sets relative to volumes $V\ge |E^m_{h_\Om}|$, and we show that the isoperimetric profile is convex in the volume range $|E^m_{h_\Om}|\le V \le |\Om|$.\par

The paper is structured as follows. In Section~\ref{sec:results} we state our main results and comment them. In Section~\ref{sec:properties} we state some properties of minimizers of~\eqref{eq:pmc} which are well known in the limit case $\k=h_\Om$, and whose extensions to any $\k \ge h_\Om$ are mostly trivial. In Section~\ref{sec:prelim} we give a characterization of the set difference $\Om^r \setminus \overline{\interior(\Om^r)}$, when $\Om$ has no necks of radius $r$. In Section~\ref{sec:main} we prove the structure of the maximal and minimal minimizers of~\eqref{eq:pmc} for $\k$, whenever $\Om$ has no necks of radius $\k^{-1}$. In Section~\ref{sec:iso} we address the isoperimetric problem in sets  $\Om$ with no necks of radius $r$ for all $r\le h_\Om^{-1}$, proving the structure of minimizers with volume greater than a certain threshold and the convexity of the isoperimetric profile above such a threshold.

\section{Statement of the main results}\label{sec:results}

Throughout the paper, with a slight abuse of notation, given a curve $\g\colon[0,1] \to \R^2$, we shall write $\g$ in place of $\g([0,1])$. For the sake of completeness, we recall that a Jordan domain is the region bounded by an injective and continuous map $\Phi\colon \mathbb{S}^1\to \mathbb{R}^2$, which is well defined thanks to the Jordan--Schoenflies theorem.\par

The first result we are going to prove is a characterization of the set difference $\Om^r \setminus \overline{\interior(\Om^r)}$, whenever $\Om$ is a Jordan domain with no necks of radius $r$. This, roughly speaking, says that such a difference consists of two families of curves $\Gamma_r^1$ and $\Gamma_r^2$:  curves in $\Gamma^1_r$ correspond to the presence of ``tendrils'' of width $r$, while curves in $\Gamma^2_r$ to the presence of ``handles'' of width $r$ as shown in Figure~\ref{fig:g1_g2}.

\begin{prop}\label{prop:struttura_diff}
Let $\Om$ be a Jordan domain with no necks of radius $r$. The following properties hold:
\begin{itemize}
\item[(a)] if $\Om^r$ is nonempty but has empty interior, then either it consists of a single point or there exists an embedding $\g\colon[0,1]\to \R^2$ of class $\textrm{C}^{1,1}$, with curvature bounded by $r^{-1}$, such that $\g([0,1]) = \Om^r$;
\item[(b)] if $\interior(\Om^r) \neq \emptyset$, then there exist two (possibly empty) families $\G^1_r$ and $\G^2_r$ of embedded curves contained in $\Om^r$ with the following properties. For each $i=1,2$ and each $\g\in \G^i_r$, 
\begin{itemize}
\item[(i)] $\g\colon[0,1]\to \Om^r$ is nonconstant and of class $\textrm{C}^{1,1}$, with curvature bounded by $r^{-1}$;
\item[(ii)] if $i=1$, then $\overline{\interior(\Om^r)}\cap \g = \{\g(0)\}$;
\item[(iii)] if $i=2$, then $\overline{\interior(\Om^r)}\cap \g = \{\g(0), \g(1)\}$;
\item[(iv)] $\G^1_r$ is finite;
\item[(v)] the following set equality holds
\[
\Om^r \setminus \overline{\interior(\Om^r)} = \bigcup_{\g\in \G^1_r} \g \left((0,1]\right) \cup \bigcup_{\g \in \G^2_r} \g \left((0,1)\right).
\]
\end{itemize}
\end{itemize}
\end{prop}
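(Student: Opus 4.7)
The plan is to carry out a first-order analysis of $\Om^r$ based on the \emph{touching set} $T(x) := \de B_r(x) \cap \de\Om$ at each $x \in \Om^r$. A direction $v \in \S^1$ satisfies $B_r(x + \e v) \subset \Om$ for small $\e > 0$ if and only if $v \cdot (p - x) \le 0$ for every $p \in T(x)$; sufficiency uses the interior-ball condition at each touching point, which forces $\de\Om$ to be locally $C^{1,1}$ with curvature bounded by $r^{-1}$. Hence $x \in \overline{\interior(\Om^r)}$ exactly when $T(x)$ lies in some \emph{open} half-circle of $\de B_r(x)$. Points in $N := \Om^r \setminus \overline{\interior(\Om^r)}$ therefore have $T(x)$ not in any open half-circle: either $T(x)$ is contained in some closed half-circle, yielding a 1-dimensional admissible tangent line to $\Om^r$ at $x$ perpendicular to the defining chord (the generic configuration being an antipodal pair $T(x) = \{p, 2x - p\}$); or $T(x)$ strictly positively spans $\R^2$, in which case no admissible direction exists even to first order and $x$ is topologically isolated in $\Om^r$. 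Since the no-necks hypothesis yields path-connectedness of $\Om^r$, this last case forces $\Om^r = \{x\}$, the singleton alternative in~(a).

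In the non-singleton situation, every $x \in N$ admits a unit tangent $\tau(x)$ well-defined up to sign by the perpendicular to $p_1(x) - p_2(x)$. Since both touched arcs of $\de\Om$ are $C^{1,1}$ with curvature bound $r^{-1}$, the map $x \mapsto (p_1(x), p_2(x))$ is Lipschitz; differentiating $p_2(x) = 2x - p_1(x)$ then shows that $\tau$ depends Lipschitz-continuously on arc length with constant at most $r^{-1}$. Running this parametrization maximally produces $C^{1,1}$-embedded arcs of curvature bounded by $r^{-1}$; when $\interior(\Om^r) = \emptyset$, path-connectedness forces a single such arc to exhaust all of $\Om^r$, establishing the curve alternative of (a).

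For case~(b) I would classify the endpoints of each maximal parametrization $\g \colon [0,1] \to \R^2$ of a component of $N$: either the endpoint lies in $\overline{\interior(\Om^r)}$, arising when the wedge of admissible directions opens from a line into a proper cone and the parametrization reattaches to the two-dimensional part of $\Om^r$; or it is a \emph{tendril tip} at which $T(\g(t))$ degenerates from an antipodal pair into an arc of $\de B_r(\g(t))$, trapping the ball inside a ``rounded cap'' of $\Om$. A component with two tendril-tip endpoints would be isolated from $\overline{\interior(\Om^r)}$, contradicting path-connectedness unless $\interior(\Om^r) = \emptyset$; hence in case~(b) each component has at least one endpoint attached to $\overline{\interior(\Om^r)}$, and the families $\G^1_r$ (components with one tendril tip, reparametrized so that the free tip is at $t = 1$) and $\G^2_r$ (components with no tendril tips) yield items (ii), (iii) and the set equality~(v). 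Finiteness of $\G^1_r$ in~(iv) follows from a packing argument: each tendril tip $x$ is the center of a ball $B_r(x) \subset \Om$ whose bounding circle contains an arc of $\de\Om$ of length at least $\pi r$, so distinct tendrils correspond to tip-regions of $\Om$ of area of order $r^2$ that are pairwise nearly disjoint, giving a bound of order $|\Om|/r^2$.

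The main obstacles I foresee are, first, handling isolated parameters $t$ at which $T(\g(t))$ has more than two elements (so the tangent determination is momentarily non-unique) while preserving the $C^{1,1}$ regularity and the curvature bound $r^{-1}$ on the parametrized curve; and second, carefully invoking the no-necks hypothesis together with the Jordan-domain topology, both to rule out closed loops or branchings in $N$ and to forbid double-tipped components in case~(b). I expect the packing bound in~(iv) to require no-necks once more, in order to separate distinct tendrils quantitatively and control the area they collectively occupy.
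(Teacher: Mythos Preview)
Your overall strategy coincides with the paper's: both analyze the touching/projection set $\Pi_x=\partial B_r(x)\cap\partial\Omega$ at each $x\in\Omega^r$, derive the same trichotomy (no admissible direction / a single tangent line / an open cone of admissible directions), use path-connectedness from the no-necks hypothesis to rule out stray isolated points or isolated arcs, and finish (iv) by an area-packing count at tendril tips. Two differences are worth flagging. First, the paper does not derive the $C^{1,1}$ regularity and the curvature bound $r^{-1}$ for the arcs from your Lipschitz-tracking of $(p_1(x),p_2(x))$; instead it observes that the antipodal touching pair yields a \emph{bilateral ball condition} of radius $r$ at every $x\in N$, which directly forces $\Omega^r$ to be locally a $C^{1,1}$ curve of curvature $\le r^{-1}$, and it imports the global $C^{1,1}$ connecting curves from \cite[Theorem~1.8]{LNS17}. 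Your first-order route can be made to work but is more delicate precisely at the parameters where $\#T(x)>2$, the obstacle you anticipate; the bilateral-ball viewpoint sidesteps that issue. Second, for (iv) the paper's packing argument is sharper than ``tip-regions are nearly disjoint'': it shows that every point of the open half-ball $B_r^+(\gamma(1))$ projects \emph{uniquely} onto $\gamma(1)\in\Omega^r$, and the proof of this unique projection is where the Jordan-domain hypothesis enters, via a loop built from two segments and a no-necks connecting curve that would enclose a boundary point $z_1^{\pm}$ and violate simple connectedness. Your plan mentions using no-necks here; be aware that the crucial topological input is actually simple connectedness of $\Omega$.
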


The structure granted by Proposition~\ref{prop:struttura_diff} might turn out useful in other contexts. We recall indeed, e.g.~the $\infty$-Laplacian problem~\cite{CF15} and the irrigation problem~\cite{BS03, Til10}, in which the set $\Om^r$ plays a role.

\begin{figure}
\begin{tikzpicture}[line cap=round,line join=round,>=triangle 45,x=1.0cm,y=1.0cm, scale=.65]
\clip(-6.2,-2.2) rectangle (8.2,2.2);
\fill[line width=0.pt,color=cqcqcq,fill=cqcqcq,fill opacity=1.0] (-1.7320508075688772,0.) -- (-0.866025403784438,0.5) -- (-0.8597787687708186,-0.5106666904850296) -- cycle;
\fill[line width=0.pt,color=cqcqcq,fill=cqcqcq,fill opacity=1.0] (1.7320508075688772,0.) -- (0.8660254037844382,0.5) -- (0.8597787687708185,-0.5106666904850297) -- cycle;
\fill[line width=0.pt,color=cqcqcq,fill=cqcqcq,fill opacity=1.0] (4.267949192431123,0.) -- (5.133974596215562,0.5) -- (5.140221231229182,-0.5106666904850299) -- cycle;
\draw [line width=0.6pt,color=cqcqcq,fill=cqcqcq,fill opacity=1.0] (6.,0.) circle (1.cm);
\draw [line width=0.6pt,color=ffffff,fill=ffffff,fill opacity=1.0] (4.267949192431123,1.) circle (1.cm);
\draw [line width=0.6pt,color=cqcqcq,fill=cqcqcq,fill opacity=1.0] (0.,0.) circle (1.cm);
\draw [line width=0.6pt,color=ffffff,fill=ffffff,fill opacity=1.0] (1.732050807568877,1.) circle (1.cm);
\draw [line width=0.6pt,color=ffffff,fill=ffffff,fill opacity=1.0] (-1.7320508075688767,1.) circle (1.cm);
\draw [line width=0.6pt,color=ffffff,fill=ffffff,fill opacity=1.0] (1.732050807568877,-1.) circle (1.cm);
\draw [line width=0.6pt,color=ffffff,fill=ffffff,fill opacity=1.0] (-1.7320508075688767,-1.) circle (1.cm);
\draw [line width=0.6pt,color=ffffff,fill=ffffff,fill opacity=1.0] (4.267949192431123,-1.) circle (1.cm);
\draw [line width=0.6pt] (-5.,1.)-- (-1.7320508075688772,1.);
\draw [line width=0.6pt] (1.7320508075688774,1.)-- (4.267949192431123,1.);
\draw [line width=0.6pt] (-5.,-1.)-- (-1.7320508075688772,-1.);
\draw [line width=0.6pt] (1.7320508075688774,-1.)-- (4.267949192431123,-1.);
\draw [shift={(-5.,0.)},line width=0.6pt]  plot[domain=1.5707963267948966:4.71238898038469,variable=\t]({1.*1.*cos(\t r)+0.*1.*sin(\t r)},{0.*1.*cos(\t r)+1.*1.*sin(\t r)});
\draw [shift={(0.,0.)},line width=0.6pt]  plot[domain=0.5235987755982988:2.6179938779914944,variable=\t]({1.*2.*cos(\t r)+0.*2.*sin(\t r)},{0.*2.*cos(\t r)+1.*2.*sin(\t r)});
\draw [shift={(0.,0.)},line width=0.6pt]  plot[domain=3.665191429188092:5.759586531581287,variable=\t]({1.*2.*cos(\t r)+0.*2.*sin(\t r)},{0.*2.*cos(\t r)+1.*2.*sin(\t r)});
\draw [shift={(-1.7320508075688772,1.)},line width=0.6pt]  plot[domain=4.71238898038469:5.759586531581288,variable=\t]({1.*1.*cos(\t r)+0.*1.*sin(\t r)},{0.*1.*cos(\t r)+1.*1.*sin(\t r)});
\draw [line width=0.6pt] (-5.,0.)-- (-1.7320508075688772,0.);
\draw [shift={(-1.732050807568877,-1.)},line width=0.6pt]  plot[domain=4.71238898038469:5.759586531581288,variable=\t]({1.*1.*cos(\t r)+0.*1.*sin(\t r)},{0.*1.*cos(\t r)+-1.*1.*sin(\t r)});
\draw [shift={(1.7320508075688774,1.)},line width=0.6pt]  plot[domain=4.71238898038469:5.759586531581288,variable=\t]({-1.*1.*cos(\t r)+0.*1.*sin(\t r)},{0.*1.*cos(\t r)+1.*1.*sin(\t r)});
\draw [shift={(1.7320508075688767,-1.)},line width=0.6pt]  plot[domain=4.71238898038469:5.759586531581288,variable=\t]({-1.*1.*cos(\t r)+0.*1.*sin(\t r)},{0.*1.*cos(\t r)+-1.*1.*sin(\t r)});
\draw [shift={(4.267949192431123,1.)},line width=0.6pt]  plot[domain=4.71238898038469:5.759586531581288,variable=\t]({1.*1.*cos(\t r)+0.*1.*sin(\t r)},{0.*1.*cos(\t r)+1.*1.*sin(\t r)});
\draw [shift={(4.267949192431123,-1.)},line width=0.6pt]  plot[domain=4.71238898038469:5.759586531581288,variable=\t]({1.*1.*cos(\t r)+0.*1.*sin(\t r)},{0.*1.*cos(\t r)+-1.*1.*sin(\t r)});
\draw [line width=0.6pt] (1.7320508075688772,0.)-- (4.267949192431123,0.);
\draw [shift={(6.,0.)},line width=0.6pt]  plot[domain=-2.617993877991494:2.617993877991494,variable=\t]({1.*1.*cos(\t r)+0.*1.*sin(\t r)},{0.*1.*cos(\t r)+1.*1.*sin(\t r)});
\draw [shift={(0.,0.)},line width=0.6pt]  plot[domain=3.6651914291880923:5.759586531581287,variable=\t]({1.*1.*cos(\t r)+0.*1.*sin(\t r)},{0.*1.*cos(\t r)+1.*1.*sin(\t r)});
\draw [shift={(0.,0.)},line width=0.6pt]  plot[domain=0.5235987755982994:2.617993877991494,variable=\t]({1.*1.*cos(\t r)+0.*1.*sin(\t r)},{0.*1.*cos(\t r)+1.*1.*sin(\t r)});
\draw [shift={(6.,0.)},line width=0.6pt]  plot[domain=-2.617993877991494:2.617993877991494,variable=\t]({1.*2.*cos(\t r)+0.*2.*sin(\t r)},{0.*2.*cos(\t r)+1.*2.*sin(\t r)});
\draw (2.85396543022974,-.15) node[anchor=north west] {$\gamma_2$};
\draw (-3.3815835625861905,-.15) node[anchor=north west] {$\gamma_1$};
\draw (2.7340510265217413,2.1279954111917614) node[anchor=north west] {$\Omega$};
\draw (-0.14389466247022642,0.5391295620607859) node[anchor=north west] {$\Omega^r$};
\end{tikzpicture}
\caption{Curves $\g_2$ with both endpoints in $\overline{\interior(\Om^r)}$ correspond to ``handles'' and connect disjoint connected components of $\interior(\Om^r)$, while curves $\g_1$ with just one endpoint in $\overline{\interior{(\Om^r)}}$ correspond to ``tendrils''.}\label{fig:g1_g2}
\end{figure}

\begin{defin}\label{def:min_max}
Let $\Om \subset \R^2$ and $\k>0$ be fixed, and let $E_\k$ be a minimizer of $\cF_\k$. We say that $E_\k$ is a \emph{maximal} minimizer if for any other minimizer $F_\k$ one has $F_\k \subset E_\k$; we say that it is a \emph{minimal} minimizer if for any other minimizer $F_\k$ one cannot have the strict inclusion $F_\k \subsetneq E_\k$.
\end{defin}

The existence of maximal and minimal minimizers is proved in Proposition~\ref{prop:min_max}, along with the uniqueness of the maximal minimizer. Concerning the uniqueness of minimal minimizers, it is verified when $\k>h_\Om$ but may fail in the case $\k=h_\Om$ (see again Proposition~\ref{prop:min_max} and Remark~\ref{rem:uniqueness_minimal}). In what follows we shall denote by $E_\k^M$ the maximal minimizer and by $E_\k^m$ the minimal minimizer in case the latter is unique.

\begin{thm}\label{thm:main} 
Let $\Om$ be a Jordan domain with $|\de \Om|=0$ and let $\k\ge h_\Om$ be fixed. Assume $\Om$ has no necks of radius $r=\k^{-1}$. Then, both maximal and minimal minimizers $E_\k^M$ and $E_\k^m$ are uniquely characterized as
\begin{align*}
E^M_\k = \Om^r\oplus B_r, && E^m_\k =\left(\overline{\interior(\Om^r)} \cup \bigcup_{\g\in\Gamma^2_r} \g \right) \oplus B_r.
\end{align*}
In particular, $\cF_\k$ has a unique minimizer (i.e., $E_\k^m=E_\k^M$) as soon as $\G^1_r$ is empty.
\end{thm}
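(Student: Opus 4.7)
The plan is to adapt the strategy used in~\cite{LNS17} to prove Theorem~\ref{thm:LNS}, extending it from the Cheeger regime $\k=h_\Om$ to all $\k\ge h_\Om$. The target identification $E^M_\k = \Om^r \oplus B_r$ (and, via Proposition~\ref{prop:struttura_diff}, the description of $E^m_\k$) is very natural: under the no-necks hypothesis, $\Om^r\oplus B_r$ coincides with the union of all balls of radius $r=\k^{-1}$ contained in $\Om$, and such balls are precisely the building blocks dictated by the Euler--Lagrange equation for $\cF_\k$. The main technical novelty compared to~\cite{LNS17} is that for $\k>h_\Om$ one has $\cF_\k[B_r]=\pi r>0$, so a single ball is no longer a minimizer and cannot be added to a known minimizer at zero $\cF_\k$-cost via a plain submodularity estimate.

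For the inclusion $E^M_\k \subseteq \Om^r\oplus B_r$, I would invoke the regularity results recalled in Section~\ref{sec:properties}: for any nontrivial minimizer $E_\k$ the set $\de E_\k \cap \Om$ is a disjoint union of open circular arcs of curvature $\k$, with $E_\k$ lying on the concave side. The resulting inner ball condition, combined with $E_\k \subseteq \Om$ and the no-necks hypothesis, yields that each $x\in E_\k$ sits in some $\overline{B_r(y)}\subseteq \Om$, i.e.\ in $\Om^r\oplus B_r$. For the reverse inclusion the idea is a sliding-ball propagation: starting from a point $x_0\in\Om^r$ for which $B_r(x_0)\subseteq E^M_\k$, the no-necks hypothesis provides a continuous curve $\g\colon[0,1]\to\Om$ with $\g(0)=x_0$, $\g(1)=x$ and $B_r(\g(t))\subseteq \Om$ for all $t$. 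Combining submodularity of $\cF_\k$ with the rigid circular-arc structure of $\de E^M_\k\cap\Om$, one shows that $E^M_\k\cup B_r(\g(t))$ remains a minimizer for every $t$, forcing $B_r(x)\subseteq E^M_\k$ by maximality.

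For the minimal minimizer I would rely on the decomposition given by Proposition~\ref{prop:struttura_diff},
\begin{align*}
\Om^r = \overline{\interior(\Om^r)} \cup \bigcup_{\g\in\G^1_r} \g((0,1]) \cup \bigcup_{\g\in\G^2_r} \g((0,1)),
\end{align*}
with $\G^1_r$ finite. Each tendril $\g\in\G^1_r$ contributes to $E^M_\k$ an appendage $\g\oplus B_r$ attached to the rest of the set only through a single disk-shaped junction at $\g(0)$; a careful perimeter-area computation exploiting the $\textrm{C}^{1,1}$ regularity of $\g$ (and the matching curvature along the attachment) shows that excising such an appendage leaves $\cF_\k$ unchanged. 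Iterating this removal over the finitely many tendrils yields $E^m_\k=(\overline{\interior(\Om^r)}\cup \bigcup_{\g\in\G^2_r}\g)\oplus B_r$ as a minimizer. Minimality holds because each handle $\g\in\G^2_r$ is the unique bridge between two lobes of $\interior(\Om^r)\oplus B_r$: omitting it would disconnect them and force a strictly positive net change in $\cF_\k$ (at least for $\k>h_\Om$). The uniqueness assertion $E^m_\k = E^M_\k$ in the case $\G^1_r=\emptyset$ is then immediate from the two characterizations.

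The principal obstacle I anticipate is the sliding-ball step: since $\cF_\k[B_r]=\pi r>0$, the submodularity inequality $\cF_\k[A\cup B]+\cF_\k[A\cap B]\le \cF_\k[A]+\cF_\k[B]$ has strict slack, and one cannot deduce minimality of $E^M_\k\cup B_r(\g(t))$ by an off-the-shelf comparison; instead, the calibration provided by the matched curvatures of $\de E^M_\k$ and $\de B_r(\g(t))$ must be leveraged to control the perimeter accounting as the ball moves along $\g$. A related subtlety in the minimal-minimizer part is that the tendril-excision computation must be carried out at a regular junction point where the local geometry is governed by circular arcs of radius $r$; handling the potentially delicate configurations of $\G^1_r$ near its endpoints $\g(0)$ may require a careful case analysis, guided by Proposition~\ref{prop:struttura_diff}.
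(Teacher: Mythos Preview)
Your plan has the right skeleton, but you have misjudged which steps are routine and which carry the real content.

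For the maximal minimizer, the two inclusions are reversed in difficulty. The inclusion $\Om^r\oplus B_r \subseteq E^M_\k$ is the easy one: Proposition~\ref{prop:palla_interna}(iii) supplies a ball $B_r(x_0)\subseteq E^M_\k$, and Lemma~\ref{lem:rollingball} (stated in the paper for all $\k\ge h_\Om$, with proof a straightforward adaptation of the Cheeger case) together with the no-necks assumption finishes it. Your worry about $\cF_\k[B_r]=\pi r>0$ spoiling submodularity is thus misplaced; the rolling ball lemma does not proceed by adding a single ball via submodularity but by a continuous deformation that exploits the arc structure of $\de E^M_\k\cap\Om$. Conversely, $E^M_\k\subseteq \Om^r\oplus B_r$ is the hard inclusion. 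Your inner-ball argument only covers points of $\de E_\k\cap\Om$; on $\de E_\k\cap\de\Om$ the boundary inherits whatever irregularity $\de\Om$ has (merely a Jordan curve with $|\de\Om|=0$), and nothing in your sketch excludes $E_\k$ from occupying narrow regions of $\Om$ where no ball of radius $r$ fits. The paper handles this direction by contradiction, invoking the structure of the cut-locus of $\Om$ and the characterization of focal points from~\cite{LNS17}, and explicitly flags it as ``much more technical''.

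For the minimal minimizer, your tendril-excision computation is correct in spirit and matches the paper's Step~(i), done cleanly there via Lemma~\ref{lem:reach} and Steiner's formulas: since $\reach(C_t)\ge r$ and $|C_t|=|C_1|$ for all $t$, one gets $\cF_\k[A_t]=\cF_\k[E^M_\k]$ immediately. The genuine gap is your minimality argument. The claim ``omitting a handle would disconnect and force a strictly positive change'' only rules out one specific class of competitors; it does not show that \emph{no} minimizer is strictly contained in $A_0$. The paper's Step~(ii) argues differently: for $\k>h_\Om$, Proposition~\ref{prop:min_max} gives a unique $E^m_\k$, hence $E^m_\k\subseteq A_0$. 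Assuming the inclusion strict, one rolls a ball inside $E^m_\k$ toward a point of $A_0\setminus\overline{E^m_\k}$ and deduces that $\de E^m_\k\cap\Om$ contains a full half-circle $S^+_r(\s(t^*))$ of length $\pi r$, whose center is forced onto some $\g\in\G^2_r$; continuing to roll toward the far lobe then produces either a singular point on the free boundary (forbidden by Proposition~\ref{prop:properties}(i)) or a cusp that can be cut to strictly lower $\cF_\k$. The case $\k=h_\Om$, where uniqueness of the minimal minimizer is not available a priori, requires a separate argument along the same lines (the paper's Step~(iii)), which your proposal does not address.
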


Theorem~\ref{thm:main} extends Theorem~\ref{thm:LNS} on the maximal minimizer for the limit case $\k=h_\Om$, originally proved in~\cite[Theorem~1.4 and Remark~5.2]{LNS17}. There are two immediate consequences to this theorem. Firstly, we show in Corollary~\ref{cor:nestedness} the nestedness of minimizers for increasing values $\k_2>\k_1$, provided that $\Om$ has no necks of radii $\kappa_1^{-1}$ and $\kappa_2^{-1}$. Secondly, we show that Theorem~\ref{thm:LNS} can be ``improved'', in the following sense. In order to apply it, one needs to know a priori the value of the constant $h_\Om$, or at least to ensure that $\Om$ has no necks of radius $r$ for a range of values such that $h_\Om^{-1}$ falls within. If this happens, \emph{then} $r$ is the unique positive solution of $\pi \rho^2=|\Om^\rho|$. In Corollary~\ref{cor:improvement}, we prove that one can ``reverse'' these operations. By this, we mean that one can consider the unique positive solution $r$ to $\pi \rho^2=|\Om^\rho|$ and \emph{then} check if the set has no necks of radius $r$. If it does, then $r$ is the inverse of $h_\Om$ and the maximal Cheeger set is $\Om^r \oplus B_r$.\par

We mention that, thanks to the above result, one derives an extension of a result by Chen (see~\cite{Che80, Giu78}, or~\cite{FK02, KL06} for convex sets). Chen's theorem provides a criterion for a set $\Om$ to be the unique Cheeger set of itself. This also follows from a more general criterion related to self-minimizers of the prescribed curvature functional $\cF_\k$, to appear in the forthcoming paper~\cite{Sar19}.\par

Finally, notice that any nontrivial minimizer $E_\k$ of $\cF_\k$ is also a set attaining the minimum of the isoperimetric profile
\[
\mathcal{J}(V)= \inf \{\,P(F): F\subset \Om,|F|=V\,\},
\] 
relatively to the volume $V=|E_\k|$. Thanks to Theorem~\ref{thm:main} we are in a position to exhibit the minimizers of $\mathcal{J}(V)$ relatively to volumes $V\ge |E^m_{h_\Om}|$, provided that $\Om$ has no necks of radius $r$, for all $r \in (0,h_\Om^{-1}]$. Specifically, the following result holds.

\begin{thm}\label{thm:m_iso}
Let $\Om$ be a Jordan domain with $|\de \Om|=0$. Assume $\Om$ has no necks of radius $r=\k^{-1}$, for all $r \in (0, h_\Om^{-1}]$. Then, for all volumes $V\ge |E^m_{h_\Om}|$, there exists $\k \in  [h_\Om, +\infty)$ and a minimizer $E_\k$ of $\cF_\k$ such that 
\begin{align*}
|E_\k|=V, && \mathcal{J}(V) = P(E_\k).
\end{align*}
\end{thm}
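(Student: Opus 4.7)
The plan is to show that every $V \in [|E^m_{h_\Om}|, |\Om|)$ is realized as $|E_\k|$ for some minimizer $E_\k$ of $\cF_\k$ at a suitable $\k \in [h_\Om, +\infty)$. Once this is granted, the identity $\mathcal{J}(V) = P(E_\k)$ is automatic: for any competitor $F \subset \Om$ with $|F| = V$, the minimality of $E_\k$ for $\cF_\k$ yields
\[
P(F) = \cF_\k[F] + \k V \ge \cF_\k[E_\k] + \k V = P(E_\k).
\]

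The strategy rests on two ingredients. First, for each $\k \in [h_\Om, +\infty)$ the interpolation construction developed earlier in the paper (as anticipated in the introduction) provides, for every $V' \in [|E^m_\k|, |E^M_\k|]$, a minimizer of $\cF_\k$ with exactly that volume. Second, combining Theorem~\ref{thm:main} (which gives $E^M_\k = \Om^r \oplus B_r$ with $r = \k^{-1}$) with the nestedness from Corollary~\ref{cor:nestedness}, I would show that
\[
\bigcup_{\k \ge h_\Om}\,\bigl[\,|E^m_\k|,\, |E^M_\k|\,\bigr] \supseteq [\,|E^m_{h_\Om}|,\, |\Om|\,).
\]

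The covering claim splits into three moves. Monotonicity: $\k \mapsto |E^M_\k|$ is non-decreasing by Corollary~\ref{cor:nestedness}. Behaviour at infinity: $|E^M_\k| \to |\Om|$ as $\k \to +\infty$, because $\Om^r \oplus B_r$ coincides with the union of all balls $B_r(x) \subset \Om$ and hence converges in measure to $\Om$ as $r \to 0^+$, using $|\de \Om|=0$. Continuity: for every sequence $r_n \to r$ in $(0, h_\Om^{-1}]$, $|\Om^{r_n} \oplus B_{r_n}| \to |\Om^r \oplus B_r|$. Monotonicity already gives the one-sided set limits, and a short compactness argument (if $y_n \in \Om^{r_n}$ with $|x-y_n| \le r_n$ and $y_n \to y$, then $\dist(y, \de \Om) \ge r$ and $|x - y| \le r$, so $x \in \Om^r \oplus B_r$) identifies both of them with $\Om^r \oplus B_r$.

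Assembling these facts, $\k \mapsto |E^M_\k|$ is continuous and surjects onto $[|E^M_{h_\Om}|, |\Om|)$, so every $V$ in that range equals $|E^M_\k|$ for some $\k$, while every $V \in [|E^m_{h_\Om}|, |E^M_{h_\Om}|]$ is attained by interpolation at $\k = h_\Om$. The hardest step is the continuity: without the no-necks assumption holding on the entire range $r \in (0, h_\Om^{-1}]$, the family $\{\Om^r\}$ could undergo abrupt topological transitions (loss of a connected component, or the sudden appearance of a tendril or a handle, in the language of Proposition~\ref{prop:struttura_diff}) that would introduce a genuine jump in $|\Om^r \oplus B_r|$ and thus a gap in the attained volumes. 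The uniform no-necks hypothesis is precisely what rules this out, which explains why it appears in the statement.
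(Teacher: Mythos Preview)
Your reduction to the covering $\bigcup_\k [|E^m_\k|,|E^M_\k|]\supseteq[|E^m_{h_\Om}|,|\Om|)$ is correct, but the argument you give for it has a real gap: the map $\k\mapsto |E^M_\k|$ is \emph{not} continuous in general. Your compactness argument only treats the direction $r_n\uparrow r$ (i.e.\ $\k_n\downarrow\k$): it shows $\bigcap_n(\Om^{r_n}\oplus B_{r_n})\subseteq\overline{\Om^r\oplus B_r}$, hence right-continuity in $\k$. For $r_n\downarrow r$ you would need $\Om^r\oplus B_r\subseteq\bigcup_n(\Om^{r_n}\oplus B_{r_n})$ up to a null set, and this fails exactly when $\Gamma^1_r\neq\emptyset$: a tendril of width $2r$ lies in $\Om^r\oplus B_r$ but in no set $\Om^{r_n}\oplus B_{r_n}$ with $r_n>r$. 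At such $\k$ the volume $|E^M_\k|$ jumps from the left by $|E^M_\k|-|E^m_\k|>0$. The ziggurat of Remark~\ref{rem:uniqueness_no_necks_r} satisfies the no-necks condition for every $r$ and still has $\Gamma^1_r\neq\emptyset$ at countably many radii, so your final sentence is incorrect: the uniform no-necks hypothesis forbids necks (and hence, by Remark~\ref{rem:nogammadue}, handles in $\Gamma^2_r$), but it does \emph{not} forbid tendrils.

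The remedy is not to prove continuity of $|E^M_\k|$ but to show that each left-jump is bridged by the interpolation interval at that same $\k$, i.e.\ that $\lim_{\k'\uparrow\k}|E^M_{\k'}|=|E^m_\k|$. Since Corollary~\ref{cor:nestedness} already gives $E^M_{\k'}\subseteq E^m_\k$ for $\k'<\k$, what is missing is the lower bound, and this requires controlling $|E^m_\k|$ as $\k$ varies. The paper does precisely this via Lemma~\ref{lem:upper_lower_minkowski}: writing $|E^m_\k|=\pi r^2+r\,\mathcal{M}_o(\overline{\interior(\Om^r)})+|\Om^r|$ by Steiner's formulas, the key point is the lower semicontinuity of $r\mapsto\mathcal{M}_o(\overline{\interior(\Om^r)})$, obtained by identifying it with the perimeter $P(\overline{\interior(\Om^r)})$. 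With this and the upper semicontinuity of $r\mapsto\mathcal{M}_o(\Om^r)$ in hand, the paper sets $\hat\k=\inf\{\k:|E^M_\k|>V\}=\sup\{\k:|E^m_\k|<V\}$ and deduces $|E^m_{\hat\k}|\le V\le|E^M_{\hat\k}|$, which is exactly the inequality your argument cannot reach.
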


Under the same hypotheses of Theorem~\ref{thm:m_iso}, we show the convexity of the isoperimetric profile $\mathcal{J}$ for $V\ge |E^m_{h_\Om}|$ by observing that it coincides with the Legendre transform of the convex function $\mathcal{G}\colon \k\mapsto -\min \cF_\k$, defined on $[h_\Om, +\infty)$, see Proposition~\ref{prop:legendre} and Corollary~\ref{cor:convexity}. This agrees with the results of~\cite{FLSS18} relatively to a relaxation of the isoperimetric profile. For the sake of completeness, we recall that the above theorem was known in the convex case, see~\cite[Theorem~3.32]{SZ97}. In the $n$-dimensional convex case, existence and uniqueness were discussed in~\cite[Section~4]{ACC05} (as well as in the Gaussian convex case~\cite[Theorem~23]{CMN10}).

\section{Properties of minimizers}\label{sec:properties}

Most of the proofs of the results presented in this section are not given, since they are easy adaptations from the limit case $\k=h_\Om$. The interested reader is referred to the original ones for which we give a precise reference.\par

We remark that throughout this section the no neck condition is \emph{never} enforced. Same goes for the request that $\Omega$ is a Jordan domain but for Section~\ref{ssec:2}. The results contained here apply generally to any minimizer in an open, bounded set $\Om \subset \R^2$.\par

First of all, notice that any minimizer $E_\k$ of $\cF_\k$ enjoys many regularity properties which come from the standard regularity theory of perimeter minimizers. Among these, the fact that $\de E_\k \cap \Om$ has constant (mean) curvature equal to $\k$, which is the reason why the functional is usually referred to as the \emph{prescribed (mean) curvature functional}. We collect these regularity properties of the boundary in the next proposition.

\begin{prop}\label{prop:properties}
Let $E_\k$ be a minimizer of $\cF_\k$ relatively to $\Om \subset \R^2$. Then, the following statements hold true:
\begin{itemize}
\item[(i)] $\de E_\k \cap \Om$ is analytic and coincides with a countable union of circular arcs of curvature $\k$, with endpoints belonging to $\de\Om$;
\item[(ii)] the length of any arc in $\de E_\k \cap \Om$ cannot exceed $\pi \k^{-1}$;
\item[(iii)] if $x\in \de E_\k \cap \de^* \Om$, then $x\in \de^*E_\k$ and $\nu_\Om(x) = \nu_{E_\k}(x)$.
\end{itemize}
\end{prop}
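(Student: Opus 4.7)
The three statements are the kind of regularity result that is standard once one recognizes that $E_\k$ is a perimeter quasi--minimizer; I would prove them by combining the minimality of $E_\k$ with De Giorgi's regularity theory and a comparison argument at the "circular" level.

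For (i), my starting point is the observation that for every measurable set $F$ with $E_\k\triangle F\Subset\Om$ one has
\[
P(E_\k)-P(F)\le \k\,|E_\k\triangle F|,
\]
so that $E_\k$ is a $(\k,+\infty)$-perimeter almost--minimizer in $\Om$ in the sense of~\cite{Mag12book}. In ambient dimension $2$ this gives that $\de E_\k\cap \Om=\de^* E_\k\cap \Om$ is a $\textrm{C}^{1,\a}$ one-dimensional manifold without singular points, and the Euler--Lagrange equation reads $H_{E_\k}\equiv \k$ on $\de E_\k\cap \Om$. Elliptic regularity for the constant--curvature equation then upgrades $\textrm{C}^{1,\a}$ to analyticity, and since the only analytic planar curves of constant curvature $\k$ are circular arcs of radius $\k^{-1}$, the boundary decomposes into such arcs. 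If an arc terminated at a point $y\in\Om$, by $\textrm{C}^1$--regularity some other arc would have to meet it at $y$ with the same unit tangent; equality of curvatures (and orientations, since $\nu_{E_\k}$ is continuous there) then forces the two arcs to lie on the same circle, contradicting the maximality implicit in the decomposition into connected components. Hence endpoints lie in $\de\Om$. Since the arcs are pairwise disjoint, each of positive length, and their total length is bounded by $P(E_\k)<+\infty$, the family is at most countable.

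For (ii), I would argue by contradiction. Suppose an arc $\a\subset \de E_\k\cap\Om$ has length strictly greater than $\pi\k^{-1}$, so that it subtends at its centre $c$ an angle $\theta>\pi$ on a circle of radius $r=\k^{-1}$. Since the curvature is computed with respect to the outer normal to $E_\k$, the centre $c$ lies on the interior side of $E_\k$, and the open disc $B_r(c)$ is locally contained in $E_\k$ near $\a$; the condition $\theta>\pi$ means that $\a$ covers more than a semicircle, which forces $B_r(c)\Subset E_\k$ entirely (any escape route from the disc would have to cross $\de E_\k$ along a second arc, and a $\textrm{C}^1$ matching of two arcs of the same radius would again contradict (i)). The competitor $F=E_\k\setminus B_r(c)$ is then admissible, and a direct computation gives
\[
\cF_\k[F]-\cF_\k[E_\k] = \bigl(|\de B_r(c)|-2|\a|\bigr) + \k |B_r(c)| = 2\pi r - 2|\a| + \pi r < 0,
\]
contradicting the minimality of $E_\k$. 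The only delicate point --- the one I expect to take most care --- is the claim $B_r(c)\Subset E_\k$, which must be argued by ruling out tangential self--intersections of $\de E_\k$ inside $B_r(c)$, using the finite collection of arcs surrounding $c$ and the fact that each one has curvature $\k$.

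For (iii), the argument is purely measure--theoretic and uses only $E_\k\subseteq \Om$. At a point $x\in\de E_\k\cap\de^*\Om$, the set $\Om$ has a unit measure-theoretic outer normal $\nu_\Om(x)$, so that the half-ball of $B_s(x)$ on the side of $\nu_\Om(x)$ has vanishing intersection with $\Om$ as $s\to 0$, and hence vanishing intersection with $E_\k$; the opposite half-ball has density $1/2$ in $\Om$. Since $x\in\de E_\k$ implies that $E_\k$ has positive upper density at $x$, blowing up $E_\k$ at $x$ produces a half-plane with outer normal $\nu_\Om(x)$, which is exactly the statement that $x\in\de^* E_\k$ and $\nu_{E_\k}(x)=\nu_\Om(x)$. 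All three claims are essentially adaptations from the case $\k=h_\Om$ proved in~\cite{LNS17,LS18b}, with no new phenomenon introduced by taking $\k>h_\Om$.
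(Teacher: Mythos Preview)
Your argument for (i) is essentially the standard one the paper defers to (De Giorgi--Tamanini almost--minimality, constant--curvature Euler--Lagrange equation, uniqueness of planar constant--curvature curves), and it is correct.

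The argument for (ii), however, has a genuine error. Even granting that $B_r(c)\subset E_\k$, your competitor $F=E_\k\setminus B_r(c)$ gives
\[
\cF_\k[F]-\cF_\k[E_\k] \;=\; (2\pi r - 2|\a|) + \k\,\pi r^2 \;=\; 3\pi r - 2|\a|,
\]
and this is \emph{not} negative once $|\a|>\pi r$: it is negative only when $|\a|>\tfrac{3}{2}\pi r$. So removing the full disc proves at best the weaker bound $|\a|\le \tfrac{3}{2}\pi r$, not the claimed $\pi r$. (There is also a minor slip: since $\a\subset\de E_\k\cap \de B_r(c)$, you cannot have $B_r(c)\Subset E_\k$ in the compact--containment sense.) The argument in~\cite[Lemma~2.11]{LP16}, to which the paper refers, does not use this competitor; it is based instead on a local comparison/sliding argument along a semicircular sub-arc, which is what yields the sharp threshold $\pi r$.

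For (iii) your sketch misses the key input. The inclusion $E_\k\subset\Om$ together with $x\in\de^*\Om$ indeed forces the blow-up of $E_\k$ to be contained in the half-plane $H$ with outer normal $\nu_\Om(x)$, but ``positive upper density at $x$'' alone does not force that blow-up to equal $H$: a quarter-plane inside $H$ already shows this. What is needed is precisely the quasi-minimality you used in~(i): blow-ups of $(\k,+\infty)$-almost-minimizers at boundary points are perimeter-minimizing cones in the limiting half-space, hence either $\emptyset$, $H$, or a half-plane through $0$ contained in $H$; the last two coincide, and the first is ruled out by density. The references the paper cites (\cite{GMT81} for Lipschitz $\Om$, and~\cite[Theorem~3.5]{LS18a} in general) all make this minimality input explicit.
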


Point~(i) is nowadays standard, and one can refer to~\cite[Section~17.3]{Mag12book}. Point~(ii) can be proved as in~\cite[Lemma~2.11]{LP16}. Point~(iii) is well known for a Lipschitz $\Om$, see for instance~\cite{GMT81}; see also~\cite[Theorem~3.5]{LS18a} for a proof valid for every $\Om$ with finite perimeter.\par

We recall the notion of P-connectedness which in the theory of sets of finite perimeter replaces the usual notion of connectedness, and from now onwards whenever we write connected it is understood to be P-connected. Given a set $A$ of finite perimeter we say that it is \emph{decomposable} if there exists a partition $(E,F)$ of $A$ such that  $P(A)=P(E)+P(F)$ and both $|E|$ and $|F|$ are strictly positive. We say that it is \emph{indecomposable} if it is not decomposable. Given any set of finite perimeter $A$, there exists a unique finite or countable family $\{E_i\}_i$ of pairwise disjoint indecomposable sets with $|E_i|>0$ such that  $P(A)=\sum_i P(E_i)$, see~\cite[Theorem~1]{ACMM01}. We shall call each of these sets $E_i$ a \emph{P-connected component} of $A$.

\begin{figure}[t]%
\centering
	\subfigure[The maximal Cheeger set of the balanced dumbell.]{\includegraphics[trim={.5cm 4cm .5cm 5cm},clip, width=.33\textwidth]{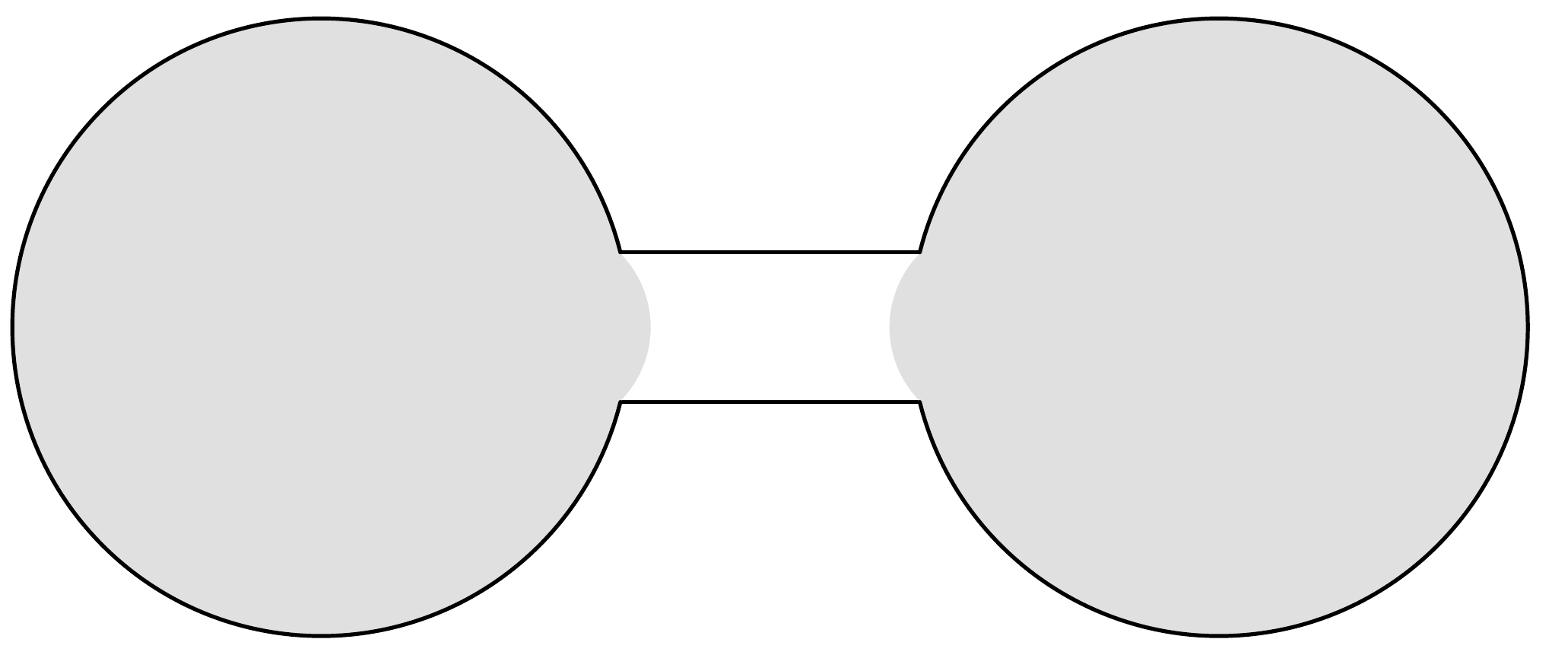}}\\%
	\subfigure[A minimal Cheeger set of the balanced dumbell. \label{fig:bal_barbell_c1}]{\includegraphics[trim={.5cm 4cm .5cm 5cm},clip, width=.33\textwidth]{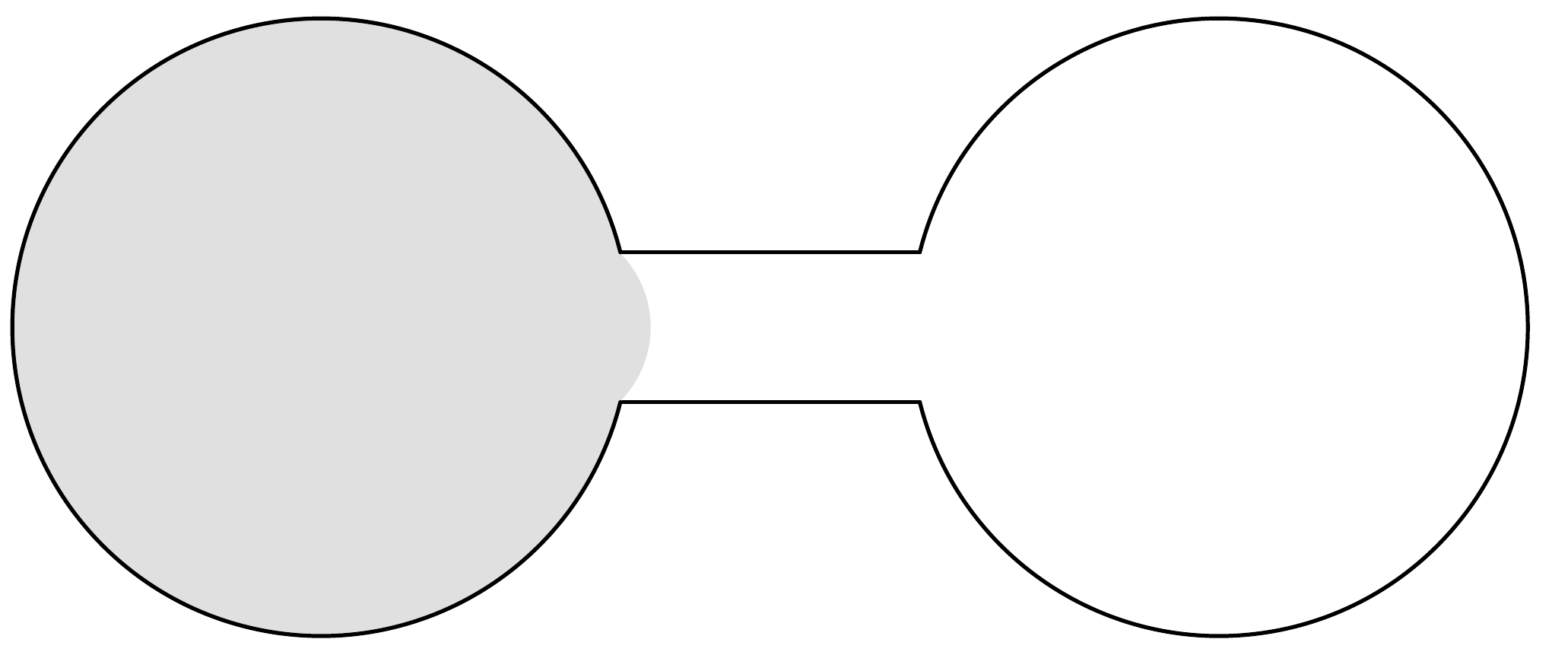}}%
	\hspace{2.5cm}
	\subfigure[A minimal Cheeger set of the balanced dumbell. \label{fig:bal_barbell_c2}]{\includegraphics[trim={.5cm 4cm .5cm 5cm},clip, width=.33\textwidth]{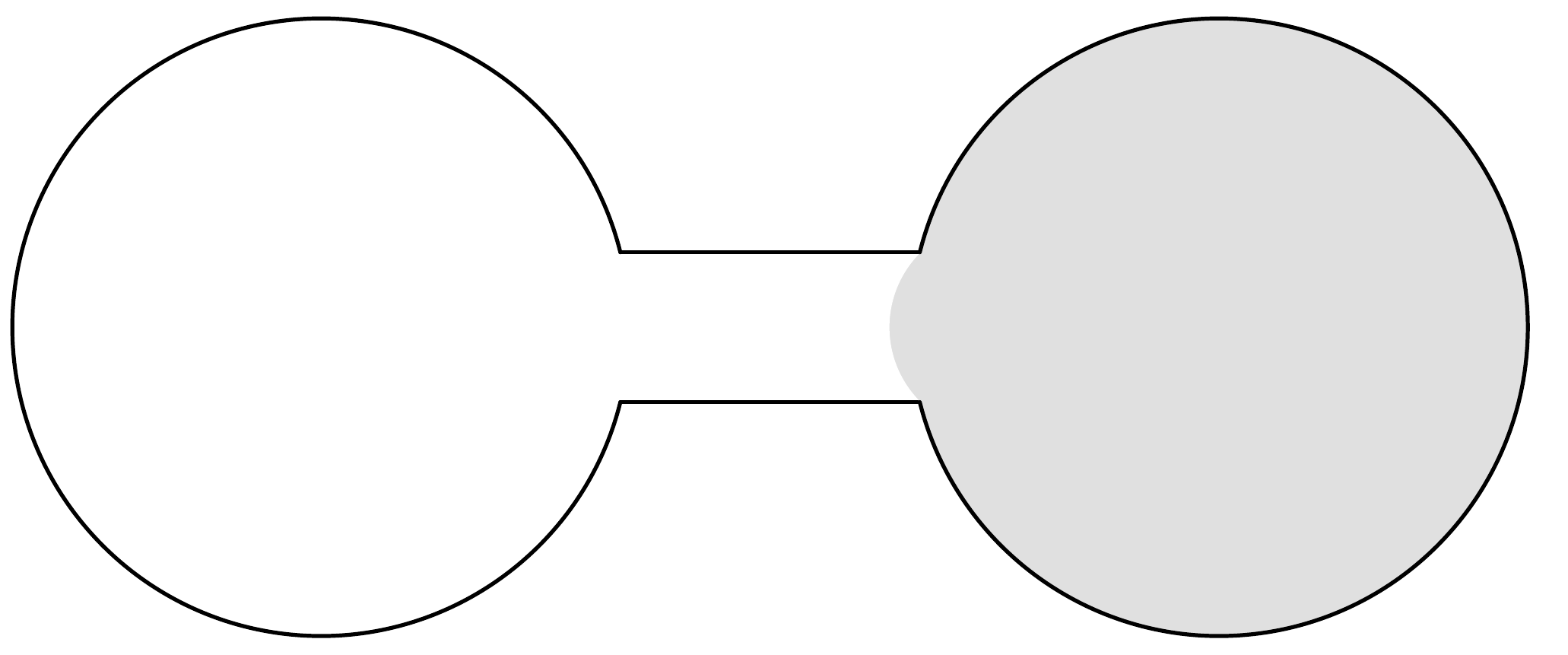}}%
\caption{The above figures show all the nontrivial minimizers of the prescribed curvature functional for $\k=h_\Om$, i.e.~the Cheeger sets of $\Om$, with $\Om$ a balanced dumbell.}
\label{fig:bal_barbell}
\end{figure}

In the next proposition we show that there exist both maximal and minimal minimizers of $\cF_\k$, which we recall we defined in Definition~\ref{def:min_max}.

\begin{prop}\label{prop:min_max}
There exists a unique maximal minimizer of $\cF_\k$, which is given by the union of all minimizers. There exist minimal minimizers of $\cF_\k$. Moreover, in the case $k>h_\Om$ one has the uniqueness of the minimal minimizer.
\end{prop}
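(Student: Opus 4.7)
The key tool is the submodularity inequality $\cF_\k[E\cup F] + \cF_\k[E\cap F] \le \cF_\k[E] + \cF_\k[F]$, which is immediate from the subadditivity of perimeter on sets of finite perimeter combined with the identity $|E\cup F|+|E\cap F|=|E|+|F|$. My first step is to deduce from it that whenever $E$ and $F$ are minimizers of $\cF_\k$, the union $E\cup F$ is a minimizer, and the intersection $E\cap F$ is a minimizer provided it is nontrivial. In the strict regime $\k>h_\Om$, where $\min\cF_\k<0$, this nontriviality is automatic: $|E\cap F|=0$ would give $\cF_\k[E\cap F]=0>\min\cF_\k$ and hence $\cF_\k[E\cup F]<\min\cF_\k$, which is impossible.

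For the maximal minimizer I use an exhaustion argument. Set $V_M := \sup\{|E|: E \text{ minimizer of } \cF_\k\}\le|\Om|$, pick a sequence of minimizers $E_n$ with $|E_n|\to V_M$, and let $F_n := \bigcup_{i\le n}E_i$, which is a minimizer by iteration of the first step. The monotone sequence $(F_n)$ then $L^1$-converges to $F := \bigcup_n F_n$ with $|F|=V_M$, and by lower semicontinuity of the perimeter $\cF_\k[F]\le\liminf_n \cF_\k[F_n]=\min\cF_\k$, so $F$ is itself a minimizer. Maximality and uniqueness follow at once: for any other minimizer $G$ the set $F\cup G$ is a minimizer of volume at most $V_M$, forcing $|G\setminus F|=0$.

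For the existence of minimal minimizers (understood modulo null sets, and restricted to nontrivial minimizers when $\k=h_\Om$) I first record a quantitative non-degeneracy: any nontrivial minimizer $E$ satisfies $P(E)\le\k|E|$, hence by the planar isoperimetric inequality $|E|\ge 4\pi/\k^2>0$. Fix a nontrivial minimizer $E_0$ and set $v_0 := \inf\{|F|: F\subseteq E_0,\ F\text{ nontrivial minimizer}\}\ge 4\pi/\k^2$. Take nontrivial minimizers $F_n\subseteq E_0$ with $|F_n|\to v_0$; since $P(F_n)\le\k|E_0|$ is uniformly bounded, $BV$-compactness produces an $L^1$-limit $F^\ast\subseteq E_0$ with $|F^\ast|=v_0$, and by lower semicontinuity $F^\ast$ is again a minimizer. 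It is minimal: any minimizer strictly contained in $F^\ast$ would be a nontrivial minimizer contained in $E_0$ of volume strictly less than $v_0$, contradicting the choice of $v_0$.

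Finally, uniqueness of the minimal minimizer when $\k>h_\Om$ is immediate: for any two minimal minimizers $E$ and $F$, the first paragraph makes $E\cap F$ a nontrivial minimizer, and minimality of $E$ (resp.\ $F$) forbids the strict inclusion $E\cap F\subsetneq E$ (resp.\ $E\cap F\subsetneq F$), so $E=F$ modulo null sets. The main delicate point I foresee is precisely the $\k=h_\Om$ case of the existence step, where the intersection of two Cheeger sets can collapse to a null set (as in the balanced dumbbell of Figure~\ref{fig:bal_barbell}), so a naive iterated-intersection construction fails; the $L^1$-compactness approach above, underpinned by the uniform volume lower bound $4\pi/\k^2$, sidesteps this obstruction.
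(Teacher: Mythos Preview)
Your proof is correct and follows essentially the same route as the paper: submodularity of $\cF_\k$ to close the class of minimizers under union and intersection, lower semicontinuity to pass to limits, and the isoperimetric volume lower bound $|E|\ge 4\pi/\k^2$ to prevent degeneration. Your organization is slightly different---you run volume-exhaustion/compactness arguments where the paper takes countable unions and intersections directly, and your uniqueness argument for $\k>h_\Om$ (intersection is automatically nontrivial, hence equals both minimal minimizers) is a bit more direct than the paper's detour through P-connected components---but these are cosmetic variations on the same idea.
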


\begin{proof}
We start noticing the following fact. If $E_\k$ and $F_\k$ are both minimizers, then $E_\k \cap F_\k$ and $E_\k \cup F_\k$ are minimizers as well, i.e.~the class of minimizers is closed under countable unions and intersections. Indeed, by the well-known inequality  (see for instance~\cite[Lemma~12.22]{Mag12book})
\[
P(E_\k \cup F_\k) + P(E_\k \cap F_\k) \le P(E_\k) +P(F_\k)\,,
\]
we have
\begin{align*}
P(E_\k&)+P(F_\k) - 2\min \cF_\k = \k|E_\k| +\k|F_\k| = \k|E_\k\cup F_\k|+ \k|E_\k\cap F_\k|\\
&\le P(E_\k \cup F_\k) + P(E_\k \cap F_\k)  - 2\min \cF_\k \le P(E_\k)+P(F_\k) - 2\min \cF_\k 
\end{align*}
thus all inequalities are equalities. Hence, we get
\begin{equation}\label{eq:ineq_per2}
P(E_\k \cap F_\k) - \k|E_\k \cap F_\k| = P(E_\k \cup F_\k) -\k|E_\k \cup F_\k|= \min \cF_\k\,.
\end{equation}
Let now $\{F^i_{\k}\}_i$ be a countable family of minimizers. Let $U_\k = \cup_i F^i_{\k}$ and $I_\k = \cap_i F^i_{\k}$. Then, thanks to~\eqref{eq:ineq_per2} and the lower semicontinuity of the perimeter, one readily shows that $U_\k$ and $I_\k$ are minimizers too. Notice that in the case $\k = h_\Om$, one can have $I_\k = \emptyset$, i.e.~the trivial minimizer. However, this can be excluded by requiring $\cap_{i\le j} E^i_\k \neq \emptyset$ for all $j$. Indeed, any nontrivial minimizer satisfies a uniform lower bound on the volume, see Proposition~\ref{prop:vol_bound}  below. Finally, observe that if two minimal minimizers $E_\k$ and $F_\k$ have a nonnegligible intersection, then the intersection is also a minimal minimizer and therefore $E_\k = F_\k$. This also shows that two distinct minimal minimizers must be P-connected components of their union. Hence, if we assume $\k>h_\Om$ we have $\cF_\k[E_\k] <0$ for every minimal minimizer $E_\k$. Thus, the existence of another minimal minimizer $F_\k\neq E_\k$ would lead to $\cF_\k[E_\k\cup F_\k] = \cF_\k[E_\k] + \cF_\k[F_\k] < \cF_\k[E_\k]$, against minimality. This shows that when $\kappa>h_\Om$ the minimal minimizer is unique.
\end{proof}

\begin{rem}\label{rem:uniqueness_minimal}
It is rather interesting to notice that there exists a unique, nontrivial minimal minimizer whenever $\k>h_\Om$, given precisely by the intersection of all minimizers. This is in contrast with the limit case $\k = h_\Om$, where one can have multiple minimal minimizers, as the dumbell in Figure~\ref{fig:bal_barbell} shows. The reason is that, for $\k=h_\Om$, any connected component of a minimizer is a minimizer itself (see Figures~\ref{fig:bal_barbell_c1} and~\ref{fig:bal_barbell_c2}), while this is false for $\k>h_\Om$ (for comparison, see Figure~\ref{fig:bal_barbell_k}).
\end{rem}

\begin{figure}[t]
\centering
\includegraphics[trim={.5cm 4cm .5cm 5cm},clip, width=.75\textwidth]{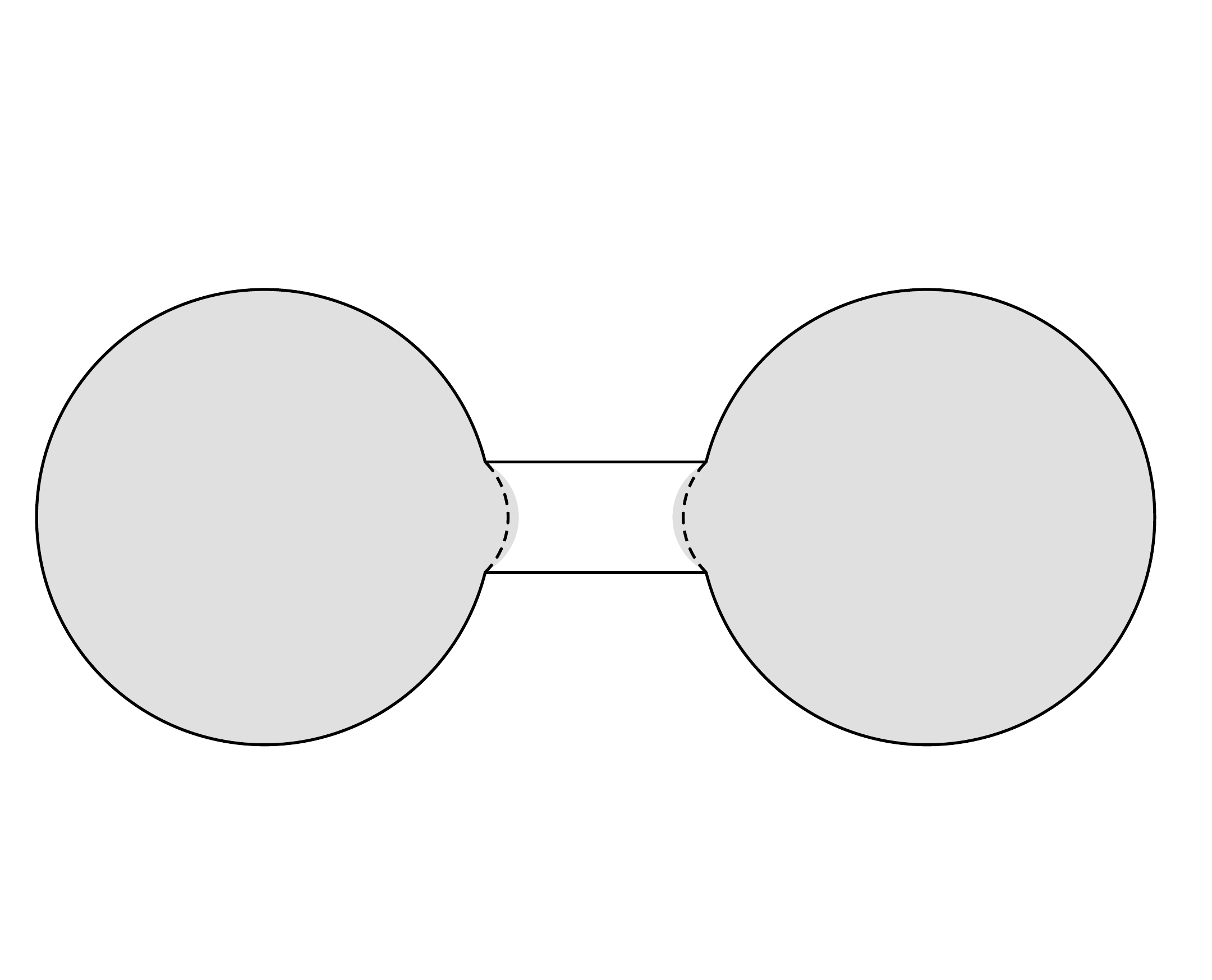} 
\caption{The shaded area represents the minimizer of the prescribed curvature functional for $\k$ close to $h_\Om$, while the dashed curves are the interior boundary of the maximal Cheeger set. Each of the connected components $E^i_\k$ is such that $\cF_\k[E^i_\k] < 0$, hence a component alone is not a minimizer.}
\label{fig:bal_barbell_k}
\end{figure}

The following lower bound to the volume of any connected component of a minimizer is readily established.

\begin{prop}\label{prop:vol_bound}
Let $E_\k$ be a minimizer of $\cF_\k$. Then, any of its connected components $E_\k^i$ has volume bounded from below by $4\pi \k^{-2}$.
\end{prop}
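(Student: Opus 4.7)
The plan is to combine the minimality of $E_\k$ with the planar isoperimetric inequality applied to the connected component $E_\k^i$. First I would use $E_\k \setminus E_\k^i$ as a competitor: since $E_\k^i$ is a P-connected component of $E_\k$, one has the additivity $P(E_\k)=P(E_\k \setminus E_\k^i)+P(E_\k^i)$ and $|E_\k|=|E_\k\setminus E_\k^i|+|E_\k^i|$. Plugging this competitor into $\cF_\k[E_\k]\le \cF_\k[E_\k \setminus E_\k^i]$ and cancelling the common terms gives the one-component inequality
\[
P(E_\k^i)\le \k\, |E_\k^i|.
\]

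Next I would apply the classical isoperimetric inequality $P(E_\k^i)\ge 2\sqrt{\pi\,|E_\k^i|}$, which holds for every set of finite perimeter in $\R^2$ (as an indecomposable P-connected component, $E_\k^i$ has finite perimeter by construction). Combining the two bounds yields
\[
2\sqrt{\pi\,|E_\k^i|}\le \k\, |E_\k^i|,
\]
so squaring and simplifying produces exactly $|E_\k^i|\ge 4\pi \k^{-2}$, which is the desired estimate.

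There is really no obstacle here beyond making sure the competitor step is legitimate, i.e.\ that removing a single P-connected component preserves the set-of-finite-perimeter structure and splits the perimeter additively. This is guaranteed by the decomposition theorem of~\cite{ACMM01} recalled above Proposition~\ref{prop:min_max}, so the argument is essentially immediate. Note also that the bound does not rely on any regularity of $\Om$ or on the no-neck assumption, consistent with the remark at the beginning of Section~\ref{sec:properties}.
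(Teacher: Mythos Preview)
Your proof is correct and follows essentially the same approach as the paper: both remove a single P-connected component to obtain a competitor and then invoke the planar isoperimetric inequality. The paper phrases the second step as a contradiction (and splits into the cases $\k=h_\Om$ and $\k>h_\Om$), whereas your direct argument handles both cases uniformly and is slightly more streamlined.
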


\begin{proof}
If $\k=h_\Om$ this is straightforward from the isoperimetric inequality and the well-known fact that any connected component of a Cheeger set is a Cheeger set itself. Suppose now that $\k > h_\Om$ and without loss of generality that $E_\k$ is decomposable, i.e.~there exist $E^1_\k, E^2_\k \subset E_\k$ with $|E^1_\k|\cdot|E^2_\k|>0$ and such that 
\begin{align*}
|E_\k|= |E^1_\k| + |E^2_\k|, && P(E_\k) = P(E^1_\k) + P(E^2_\k).
\end{align*}
Assume by contradiction that $|E_\k^1| < 4\pi\k^{-2}$ and denote by $B_{E_\k^1}$ the ball with same volume of $E^1_\k$. Its radius $r_{E^1_\k}$ is strictly less than $2\k^{-1}$. Thus,
\begin{align*}
\cF_\k[E^1_\k] &= P(E^1_\k)-\k|E^1_\k| \ge P(B_{E^1_\k}) - \k|B_{E^1_\k}|\\
&= 2\pi r_{E^1_\k}  - \k \pi r_{E^1_\k}^2 = \pi r_{E^1_\k}(2-\k r_{E^1_\k}) > 0.
\end{align*}
Therefore $\cF_\k [E^2_\k] < \cF_\k [E_\k]$, against the minimality of $E_\k$.
\end{proof}

Finally, we recall the \emph{rolling ball lemma}~\cite[Lemma~2.12]{LP16}, which was later refined~\cite[Lemma~1.7]{LNS17}. This still holds for general $\k$, and the proof is a straightforward adaptation of the original lemma.

\begin{lem}[Rolling ball]\label{lem:rollingball}
Let $\k\ge h_\Om$ be fixed, and let $E^M_\k$ be the maximal minimizer of $\cF_\k$. If $E^M_\k$ contains a ball $B_r(x_0)$ of radius $r=\k^{-1}$, then it contains all balls of same radius that can be reached by rolling $B_r(x_0)$, i.e.~it contains any ball $B_r(x_1)$ such that there exists a continuous curve $\g\colon[0,1]\to \Omega$ with $\g(0)=x_0$, $\g(1)=x_1$ and $B_r(\g(t))\subset \Om$ for all $t\in[0,1]$.
\end{lem}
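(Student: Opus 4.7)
The plan is to define
\[
T := \{t \in [0,1] : B_r(\g(t)) \subset E^M_\k\},
\]
note that $0 \in T$ by hypothesis, and show that $T$ is both closed and open in $[0,1]$; thus $T = [0,1]$ and in particular $B_r(x_1) \subset E^M_\k$. I work with the open representative of $E^M_\k$, well-defined thanks to the analyticity of $\de E^M_\k \cap \Om$ (Proposition~\ref{prop:properties}(i)). Closedness is immediate from continuity of $\g$: if $t_n \to t$ with $t_n \in T$, then any $y \in B_r(\g(t))$ satisfies $|y - \g(t_n)| < r$ for large $n$, so $y \in B_r(\g(t_n)) \subset E^M_\k$.

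For openness, fix $t_0 \in T$ and, for $t$ slightly larger, set $F_t := E^M_\k \cup B_r(\g(t)) \subset \Om$ and $A_t := B_r(\g(t)) \setminus E^M_\k$. A direct boundary count gives
\[
\cF_\k[F_t] - \cF_\k[E^M_\k] = \ell_1(t) - \ell_2(t) - \k\,|A_t|,
\]
where $\ell_1(t) := \mathcal{H}^1(\de B_r(\g(t)) \setminus E^M_\k)$ and $\ell_2(t) := \mathcal{H}^1(\de E^M_\k \cap B_r(\g(t)))$. The target becomes $\ell_1(t) = \ell_2(t)$: combined with the above formula this yields $\cF_\k[F_t] \le \cF_\k[E^M_\k]$; minimality of $E^M_\k$ forces equality, so $F_t$ is itself a minimizer, and the maximality of $E^M_\k$ (Proposition~\ref{prop:min_max}) then gives $F_t \subset E^M_\k$, i.e., $B_r(\g(t)) \subset E^M_\k$.

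The identity $\ell_1(t) = \ell_2(t)$ is the geometric heart of the argument. The key fact is that at every point $p \in \overline{B_r(\g(t_0))} \cap \de E^M_\k$, the curve $\de E^M_\k$ locally coincides with $\de B_r(\g(t_0))$. Indeed, since $B_r(\g(t_0)) \subset E^M_\k$, the two $C^2$ curves touch tangentially at $p$ from the same side, hence share tangent line and inward normal; both have signed curvature $\k = 1/r$ (by Proposition~\ref{prop:properties}(i) for $\de E^M_\k$), so uniqueness of plane curves with prescribed constant curvature and prescribed initial unit tangent yields local agreement. Consequently, for $|t - t_0|$ sufficiently small, $A_t$ is the disjoint union of small crescents near the tangency points lying on the forward side of the motion, each crescent being bounded by two arcs of congruent circles of radius $r$ (one on $\de B_r(\g(t))$, the other on $\de B_r(\g(t_0))$, locally identical with $\de E^M_\k$). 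The reflection symmetry across the line joining the two ball centers makes these two arcs have equal length, and summing over crescents gives $\ell_1(t) = \ell_2(t)$. The main technical point is to take $|t - t_0|$ small enough to avoid the at most countable set of junctions between distinct circular arcs of $\de E^M_\k$; tangencies at points of $\de E^M_\k \cap \de \Om$ are handled identically, since Proposition~\ref{prop:properties}(iii) gives $\nu_{E^M_\k} = \nu_\Om$ there, and the hypothesis $B_r(\g(t)) \subset \Om$ keeps the rolling compatible with the $\de \Om$ side.
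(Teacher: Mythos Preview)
The paper does not supply its own proof of this lemma; it refers to \cite[Lemma~2.12]{LP16} and \cite[Lemma~1.7]{LNS17} and notes that the extension to general $\k\ge h_\Om$ is straightforward. Your open--closed argument on $T$, combined with the competitor $F_t = E^M_\k \cup B_r(\g(t))$ and the tangency--curvature identification of the free boundary with $\de B_r(\g(t_0))$, is exactly the approach used in those references.

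Two points in your write-up warrant tightening. First, the claim that tangencies on $\de\Om$ are ``handled identically'' via Proposition~\ref{prop:properties}(iii) is misleading: that proposition gives only agreement of normals, not local coincidence of $\de E^M_\k$ with $\de B_r(\g(t_0))$. The correct observation is that near such a tangency point $p$ one has $\de E^M_\k\subset\de\Om$ locally, hence $E^M_\k$ locally equals $\Om$; since $B_r(\g(t))\subset\Om$, the set $A_t$ is empty near $p$ and nothing is contributed to either $\ell_1$ or $\ell_2$. Second, the phrase ``take $|t-t_0|$ small enough to avoid junctions'' is imprecise, because the tangency configuration is a feature of $t_0$, not of $t$. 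What is actually required is that every free-boundary arc of $\de E^M_\k$ \emph{not} lying on $\de B_r(\g(t_0))$ stays a positive distance away from it (if it touched, the curvature argument would force coincidence); then for $|t-t_0|$ small the thin crescent $B_r(\g(t))\setminus B_r(\g(t_0))$ meets $\de E^M_\k$ only in arcs of $\de B_r(\g(t_0))$, each component of $A_t$ is genuinely bounded by two arcs of congruent circles, and your reflection-symmetry identity $\ell_1(t)=\ell_2(t)$ goes through.
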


\subsection{Additional properties when \texorpdfstring{$\Om$}{Om} is a Jordan domain}\label{ssec:2}

Here, we state a few additional properties of minimizers when $\Om$ is a Jordan domain. Their proofs are omitted as they closely follow the corresponding ones presented in~\cite{LNS17} for the case $\k=h_\Om$. 

\begin{prop}\label{prop:palla_interna}
Suppose $\Om\subset \R^2$ is a Jordan domain with $|\de \Om| = 0$, and let $E_\k$ be a minimizer of $\cF_\k$. Then,
\begin{itemize}
\item[(i)] the curvature of $\de E_\k$ is bounded from above by $\k$ in both variational and viscous senses;
\item[(ii)] $E_\k$ is Lebesgue-equivalent to a finite union of simply connected open sets, hence its measure-theoretic boundary $\de E_\k$ is a finite union of pairwise disjoint Jordan curves;
\item[(iii)] $E_\k$ contains a ball of radius $\k^{-1}$.
\end{itemize}
\end{prop}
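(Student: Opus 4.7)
My plan is to follow the template used for the Cheeger case $\k=h_\Om$ in~\cite[Section~2]{LNS17}, since the three statements are routine extensions thereof once Proposition~\ref{prop:properties} and Proposition~\ref{prop:vol_bound} are in hand. For the variational part of (i), I would compute the first variation of $\cF_\k$ along diffeomorphisms $\Phi_t=\mathrm{id}+tX$ generated by $X\in C^\infty_c(\R^2;\R^2)$ that are admissible, in the sense that $\Phi_t(E_\k)\subset\Om$ for small $|t|$: testing with $X$ supported inside $\Om$ recovers $H_{E_\k}=\k$ on $\de^* E_\k\cap\Om$ (in agreement with Proposition~\ref{prop:properties}(i)), while testing with $X$ supported near $\de E_\k\cap\de^*\Om$ (where, thanks to Proposition~\ref{prop:properties}(iii), only inward perturbations $X\cdot\nu_{E_\k}\le 0$ remain admissible) yields the one-sided bound $H_{E_\k}\le\k$. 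For the viscous sense, every $x\in\de E_\k\cap\Om$ lies by Proposition~\ref{prop:properties}(i) on a circular arc of curvature exactly $\k$, hence the osculating disc $D_x$ of radius $\k^{-1}$ at $x$ is on one of the two sides of $\de E_\k$; I would exclude the possibility that $D_x$ sits in the complement of $E_\k$ by a cut-off argument, removing from $E_\k$ a thin circular segment near $x$ and checking that this strictly decreases $\cF_\k$ via the same elementary perimeter/volume comparison already used in Proposition~\ref{prop:vol_bound}. The viscous bound at points $x\in\de E_\k\cap\de\Om$ then follows by approximation with interior boundary points.

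\medskip

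For (ii), Proposition~\ref{prop:vol_bound} immediately bounds the number of connected components of $E_\k$ by $|\Om|\k^2/(4\pi)$, so there are only finitely many. To see that each component is simply connected, I would fix the good open representative of $E_\k$ and argue by contradiction: if a bounded connected component $U$ of $\R^2\setminus E_\k$ were contained in $\Om$, then (since $\Om$ is a Jordan domain with $|\de\Om|=0$) it would lie in $\Om$ up to a null set, and $E_\k\cup U$ would be an admissible competitor with
\[
\cF_\k[E_\k\cup U]=\cF_\k[E_\k]-P(U)-\k|U|<\cF_\k[E_\k],
\]
contradicting minimality. Finiteness combined with simple connectedness and the Jordan curve theorem then yield that $\de E_\k$ is a pairwise disjoint union of finitely many Jordan curves.

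\medskip

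For (iii), if $\de E_\k\cap\Om\ne\emptyset$ I pick any $x$ on one of the circular arcs of Proposition~\ref{prop:properties}(i); by the viscous bound from (i), the osculating disc of radius $\k^{-1}$ at $x$ is contained in $E_\k$, giving the required inscribed ball. Otherwise $E_\k$ coincides with $\Om$ up to a null set, and the classical fact that a Cheeger set of any planar open bounded set contains a ball of radius $h_\Om^{-1}\ge\k^{-1}$ provides such a ball inside $\Om=E_\k$. The hardest step of the whole proposition is the viscous part of (i): the cut-off argument must be made rigorous and checked to behave uniformly near points where several arcs meet or where the boundary touches $\de\Om$, but it is otherwise strictly parallel to~\cite[Proposition~2.3]{LNS17}, with $h_\Om$ systematically replaced by $\k$.
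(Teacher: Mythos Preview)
Your treatment of the variational half of (i) and of (ii) matches what the paper does (it simply refers to \cite[Lemmas~2.2, 2.4 and Propositions~2.9, 2.10]{LNS17}), and those parts are fine.

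The genuine gap is in (iii), and it stems from a conflation of local and global in your handling of the osculating disc. At a point $x$ on a free-boundary arc the disc $D_x$ of radius $\kappa^{-1}$ lies on the $E_\kappa$ side \emph{only locally near $x$}; this is automatic from the sign of the curvature and needs no cut-off argument. What your sketch nowhere establishes is that $D_x$ is \emph{globally} contained in $E_\kappa$: a priori $\partial D_x$ could meet $\partial\Omega$, or another arc of $\partial E_\kappa\cap\Omega$, away from $x$, and it is not even clear that the centre of $D_x$ lies in $\Omega$. The viscous bound of (i), in the sense of \cite[Definition~2.3]{LNS17}, is a pointwise comparison condition and does not by itself deliver an interior ball of radius $\kappa^{-1}$. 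So the sentence ``by the viscous bound from (i), the osculating disc \ldots\ is contained in $E_\kappa$'' is an unjustified leap.

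The paper's route to (iii) supplies exactly the missing step: it combines (i) and (ii) with \cite[Theorem~1.6]{LNS17}, a Pestov--Ionin type theorem stating that any planar Jordan domain whose boundary has curvature bounded above by $\kappa$ in the viscous sense must contain a disc of radius $\kappa^{-1}$. This is a nontrivial global result, which is precisely why the paper invokes \emph{both} (i) and (ii) as inputs to (iii). It also handles the case $E_\kappa=\Omega$ uniformly, without the separate fallback to the Cheeger case.
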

The definitions of curvature in variational and in viscous senses, notions that appear in the above proposition, can be found resp.~in~\cite{BGM03} and~\cite[Definition~2.3]{LNS17}. The proof of~(i) is obtained by mimicking~\cite[Lemma~2.2 and Lemma~2.4]{LNS17}. The proof of~(ii) follows by arguing as in~\cite[Propositions~2.9 and~2.10]{LNS17}. The proof of claim (iii) follows from (i) and (ii) combined with~\cite[Theorem~1.6]{LNS17}.

\section{The set difference \texorpdfstring{$\Om^r \setminus \overline{\interior(\Om^r)}$}{Omr - clos(int(Omr))}}\label{sec:prelim}

Here we prove Proposition~\ref{prop:struttura_diff}, i.e.~the structure of the set difference $\Om^r \setminus \overline{\interior(\Om^r)}$, under the assumption that $\Om$ has no necks of radius $r$. According to Definition~\ref{def:no_necks}, this means that given any two balls $B_r(x_0)$ and $B_r(x_1)$ contained in $\Om$, there exists a continuous curve $\g \colon [0,1] \to \Om^r$ such that $\g(0)=x_0$ and $\g(1)=x_1$. Thanks to~\cite[Theorem~1.8]{LNS17}, we can further assume $\g$ to be of class $\textrm{C}^{1,1}$ with curvature bounded by $1/r$.\par

We now lay down some notation we shall use throughout the paper from now onwards. Given a regular curve $\g\colon [0,1] \to \R^2$ of class $\textrm{C}^{1,1}$, we set 
\begin{align*}
\g^\prime(0) = \lim_{t\to 0^+}\g^\prime(t)\,, && \g^\prime(1) = \lim_{t\to 1^-}\g^\prime(t).
\end{align*}
We denote by $\nu(t)$ the renormalization of $\g^\prime(t)$, i.e.
\[
\nu(t)= \frac{\g^\prime(t)}{|\g^\prime(t)|}.
\]
Owing to the regularity of $\g$, $\nu(t)$ is continuous and defined on the whole interval $[0,1]$. Given $r>0$, we define the open half-ball 
\begin{equation}\label{eq:semipalla}
B_r^+(\g(t))=\{\,z\in \R^2: |z-\g(t)|<r, (z-\g(t))\cdot \nu(t) >0 \,\},
\end{equation}
and the relatively open half-circle 
\[
S_r^+(\g(t))=\{\,z\in \R^2: |z-\g(t)|=r, (z-\g(t))\cdot \nu(t) >0 \,\},
\]
that are ``oriented in the direction $\nu(t)$'' (note that we have dropped the explicit dependence on $\nu(t)$ in the notation). Finally, the endpoints of $S_r^+(\g(t))$ are denoted by
\begin{align}\label{eq:ztpm}
z^+_t = \g(t)+r\nu(t)^\perp, && z^-_t = \g(t)-r\nu(t)^\perp.
\end{align}

\begin{proof}[Proof of Proposition~\ref{prop:struttura_diff}]
If $\Om^r$ and $\overline{\interior(\Om^r)}$ agree, there is nothing to prove. Let us suppose then that there exists $x\in \Om^r\setminus \overline{\interior(\Om^r)}$, and let us denote by $\Pi_x$ its ``projection set'', i.e.
\[
\Pi_x=\{y\in \de \Om\,:\, |y-x|=r\}\,.
\]
We split the proof in two steps, following points~(a) and~(b) of the statement.\par
\emph{\underline{(a) The case $\interior(\Om^r)=\emptyset$.}} We can distinguish three subcases, according to the properties of the projection set $\Pi_x$.\par
\begin{itemize}
\item[(a1)] For all directions $\nu \in \S^1$, there exist two points $y_1, y_2 \in\Pi_x$ such that
\[
\nu \cdot (y_1-x) < 0 < \nu \cdot (y_2-x).
\]
\item[(a2)] There exist a direction $\nu \in \S^1$ and two distinct points $y_1\,, y_2 \in \Pi_x$ such that
\begin{align*}
\nu \cdot (y_1-x) = \nu \cdot (y_2-x) = 0, && \nu \cdot (y-x) \ge 0,\quad \forall y\in \Pi_x.
\end{align*}
\item[(a3)] There exist a direction $\nu \in \S^1$ and $\delta>0$ such that
\[
\nu \cdot (y-x) \ge \delta, \quad \forall y\in \Pi_x.
\]
\end{itemize}
We start noticing that case~(a3) can never happen. Indeed, one could easily show that $x-\varepsilon \nu \in \interior(\Om^r)$, for $\varepsilon$ sufficiently small which contradicts $\interior(\Om^r)=\emptyset$.
\par
In case~(a1), it is immediate to see that $x$ is an isolated point in $\Om^r$. Then, as $\Om$ has no necks of radius $r$ we infer that $\Om^r =\{x\}$, i.e.~it is a constant curve. We are then left with case~(a2), which implies that $\Om^r$ satisfies a bilateral ball condition of radius $r$ at $x$, which means there exist two balls of radius $r$, $B_1$ and $B_2$, such that $\overline{B_1}\cap \overline{B_2} = \{x\}$, and locally $\Om^r \cap (B_1\cup B_2)=\emptyset$. As this holds for any choice of $x$, and as $\Om^r$ is path-connected, this necessarily means that $\Om^r=\g$, with $\g$ a $\textrm{C}^{1,1}$ curve with curvature bounded by $r^{-1}$. Further, this curve cannot be a loop:~since $\Om$ is a Jordan domain, this would imply that $\interior(\Om^r)\neq \emptyset$. Therefore, $\Om^r$ is diffeomorphic to the closed segment $[0,1]$.\par

\emph{\underline{(b) The case $\interior(\Om^r)\neq \emptyset$.}} 
 We fix $y\in\interior(\Om^r)$, which exists since by hypothesis this set is not empty. The assumption of no necks of radius $r$ paired with~\cite[Theorem~1.8]{LNS17} yields the existence of a $\textrm{C}^{1,1}$ curve $\g$, with curvature bounded by $r^{-1}$, such that  $\g(0)=x$ and $\g(1)=y$. Let $T>0$ be the first time for which $\g(T)\in \overline{\interior(\Om^r)}$.  Thanks to Zorn's lemma, we can extend $\g_{|[0,T)}$ to a maximal curve $\widetilde \g$  in $\Om^r \setminus \overline{\interior(\Om^r)}$. Moreover, by continuity we can extend $\widetilde \g$ to a closed interval, and up to a reparametrization we can assume it to be $[0,1]$. Without loss of generality, suppose that $\widetilde \g(0)=\g(T) \in \de(\interior(\Om^r))$. Hence, there are two possible cases:~either $\widetilde  \g(1)$ belongs as well to $\de(\interior(\Om^r))$; or $\widetilde \g(1)$ belongs to $\Om^r \setminus \overline{\interior(\Om^r)}$.\par
 By reasoning as in the first step, we notice that $x$ has at least two 2 antipodal projections in $\Pi_x$. By the bilateral ball condition, which holds at any $x\in \widetilde \g$, one can show that there exists $\e = \e_x>0$ such that
 \[
 \left( \Om^r \setminus \overline{\interior(\Om^r)} \right) \cap B_\e(x) = \widetilde \g \cap B_\e(x).
 \]
 We define $\G^1_r$ as the collection of connected components of $ \Om^r \setminus \overline{\interior(\Om^r)}$ that are diffeomorphic to the half-closed interval $(0,1]$, and similarly $\G^2_r$ as the collection of connected components that are diffeomorphic to the open interval $(0,1)$.
\par
We are left with showing that $\#\G^1_r<\infty$. Let us fix any $\g\in \G^1_r$ and let $x_\g=\g(1)$. By reasoning as in the first part of the proof, we have that $z_1^\pm$ as defined in~\eqref{eq:ztpm} belong to $\Pi_{x_\g}$. We claim that all $z\in B^+_r(x_\g)$ have as unique projection on $\Om^r$ the point $x_\g$, where $B^+_r(x_\g)$ is defined in~\eqref{eq:semipalla}. This proves that from any curve $\g \in \G^1_r$ stems a contribute to the volume of at least $\frac \pi 2 r^2$ . The finiteness of $|\Om|$ implies then the finiteness of the family $\G^1_r$.\par

To show this we argue by contradiction. Let us suppose that some $z\in B^+_r(x_\g)$ has as unique projection $y\in \Om^r$ with $y\neq x_\g$. By the no neck assumption there is a $\textrm{C}^{1,1}$ curve $\sigma$ from $x_\g$ to $y$, which lies in $\Om^r$. We claim that the loop constructed by concatenating $\sigma$, the segment $[x_\g,z]$ and the segment $[z,y]$ contains either $z_1^+$ or $z_1^-$ giving a contradiction to the simple connectedness of $\Om$.\par
This follows by noticing that both the segment $[z,y]$ and the curve $\sigma$ cannot pass across the segment $[z_1^-,z_1^+]$. First, assume by contradiction that $[z,y]$ crosses the open segment $[z_1^-,z_1^+]$ in $w$. Trivially, $w$ cannot coincide with $x_\g$ otherwise this contradicts $y$ being the closest point in $\Om^r$ to $z$. Moreover, as $w$ is in the open segment $[z_1^-, z_1^+]$ it projects uniquely on $x_\g$, therefore $|y-w|>|w-x_\g|$. By triangular inequality it immediately follows that $|x_\g-z|< |y-z|$ which is a contradiction.\par

Second, on the one hand $\sigma$  cannot pass through the points lying in the open segments $[x_\g,z_1^+]$ and $[x_\g,z_1^-]$ as all these have distance from the boundary less than $r$ (since $z_1^-, z_1^+ \in \Pi_{x_\g}$). On the other hand, for some $\e=\e(x_\g)<<1$ we have $\Om_r \cap B_\e(x_\g)=\g$. As $\g \in \G^1_r$ and $x_\g=\g(1)$ one has that $\Om^r\cap B_\e^+(x_\g)=\emptyset$. Thus, $\sigma \cap B^+_\e(x_\g)= \emptyset$. This establishes that all $z\in B_r^+(x_\g)$ have as unique projection on $\Om^r$ the point $x_\g$.
\end{proof}

\begin{rem}
As can be seen from the proof of the above proposition, we remark that any point $x$ belonging to $\g$, $x=\g(t)$, with $\g$ in either $\G^1_r$ or $\G^2_r$, has two projections on $\de \Om$ that are antipodal, given by $z^\pm_t$, defined in~\eqref{eq:ztpm}. This in particular implies that any strip
\begin{equation}\label{def:strip}
\mathcal{S}(\g) =\left\{\,\g(t) \pm \rho \nu(t)^\perp: t\in [0,1], \rho\in [0,r)\,\right\}
\end{equation}
is diffeomorphic to the rectangle $[0,1]\times (-r,r)$. Moreover, given any two curves $\g_1, \g_2 \in \G^1_r$ the strips $\mathcal{S}(\g_1), \mathcal{S}(\g_2)$ are pairwise disjoint. Finally, notice that the ``lateral boundart'' of $\mathcal{S}(\g)$
\begin{equation}\label{def:lateral_surface}
\de_L \mathcal{S}(\g) =\left \{\, z^\pm_t = \g(t) \pm r \nu(t)^\perp: t\in [0,1]\,\right \}
\end{equation}
is contained in $\de \Om$.
\end{rem}

\begin{rem}\label{rem:nogammadue}
Notice the following: if $\Om$ is a Jordan domain with no necks of radius $r$ for all $r\le R \le \inr(\Om)$, then for every $r<R$ the set $\Gamma^2_r$ is empty. Argue by contradiction and suppose $\exists \g \in \G^2_r$. The points $\g(0)$ and $\g(1)$ belong to $\de (\interior{(\Om^r)})$. Therefore, we can find a point $z_0 \in \interior{(\Om^r)}$ (resp. $z_1$) arbitrarily close to $\g(0)$ (resp. $\g(1)$). Clearly one has $r<\bar r$ where $\bar r= \min\{\,\dist(z_0; \de \Om); \dist(z_1; \de \Om) \,\}$ and without loss of generality we can suppose $\bar r<R$. As $\Om$ has no necks of radius $\bar r$, there exists a curve $\s$ joining these two points contained in $\Om^{\bar r}$. Being $\bar r > r$, the curves $\g$ and $\s$ cannot meet but in the endpoints. Therefore, by concatenating these two curves, and the segments $[\g(i), z_i]$ for $i=0,1$, one reaches a contradiction as in the proof of Proposition~\ref{prop:struttura_diff}.
\end{rem}

\section{Structure of minimizers}\label{sec:main}

In this section we give the proof of Theorem~\ref{thm:main}. The part concerning the structure of the maximal minimizer closely follows the one of~\cite[Theorem~1.4]{LNS17} for the case $\k=h_\Om$, while the one about the minimal minimizer relies on Proposition~\ref{prop:struttura_diff}. \par

We first need to prove that for $\k > h_\Om$, Proposition~\ref{prop:struttura_diff} applies, i.e.~that $\Om^{r}$ with $r=\k^{-1}$ has nonempty interior. We do so in the next lemma.

\begin{lem}\label{lem:struttura_diff}
Let $\Om$ be a Jordan domain and let $\k \ge h_\Om$. Assume that $\Om$ has no necks of radius $r=\k^{-1}$, then $\interior(\Om^r)$ is not empty.
\end{lem}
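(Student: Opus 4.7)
The plan is to split the argument by whether $\k > h_\Om$ or $\k = h_\Om$ and, in each case, to exhibit a point $y\in\Om$ with $\dist(y,\de\Om)>r$, which is the same as saying $y\in\interior(\Om^r)$.

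The strict case $\k>h_\Om$ is essentially immediate. Any Cheeger set of $\Om$ is a nontrivial minimizer of $\cF_{h_\Om}$, so by Proposition~\ref{prop:palla_interna}(iii) (applied with $h_\Om$ in place of $\k$) it contains a ball $B_{h_\Om^{-1}}(c)$. Since $r = \k^{-1} < h_\Om^{-1}$, the center satisfies $\dist(c,\de\Om)\ge h_\Om^{-1}>r$, hence $c\in\interior(\Om^r)$; notably, the no-neck hypothesis is not even used in this case.

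The delicate equality case $\k = h_\Om$, so $r = h_\Om^{-1}$, I would settle by contradiction. Assuming $\interior(\Om^r)=\emptyset$, Proposition~\ref{prop:struttura_diff}(a) forces $\Om^r$ to be either a single point or a $C^{1,1}$ arc of curvature bounded by $1/r$, in either case a one-dimensional set satisfying $|\Om^r|=0$. On the other hand, under the ambient assumption $|\de\Om|=0$ (which is the standing hypothesis under which this lemma is invoked inside Theorem~\ref{thm:main}), $\Om$ is a Jordan domain with no necks of radius $r=h_\Om^{-1}$, so Theorem~\ref{thm:LNS} applies; its inner Cheeger formula yields $|\Om^r|=\pi r^2>0$, contradicting $|\Om^r|=0$.

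The genuine obstacle is exactly this limiting case: the ball produced by Proposition~\ref{prop:palla_interna}(iii) now has radius exactly $r$ and could be tangent to $\de\Om$, so its center is only guaranteed to lie in $\Om^r$, not in its interior. The quantitative inner Cheeger formula of Theorem~\ref{thm:LNS} is precisely what rules out the two one-dimensional configurations for $\Om^r$ allowed by Proposition~\ref{prop:struttura_diff}(a) and forces $\inr(\Om)>r$.
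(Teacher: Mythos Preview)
Your proposal is correct and follows essentially the same route as the paper's proof: the strict case via the ball of radius $h_\Om^{-1}$ inside a Cheeger set (Proposition~\ref{prop:palla_interna}(iii)), and the limiting case $\k=h_\Om$ by contradiction, combining the one-dimensional structure from Proposition~\ref{prop:struttura_diff}(a) with the inner Cheeger formula of Theorem~\ref{thm:LNS}. You also correctly flag that invoking Theorem~\ref{thm:LNS} implicitly requires $|\de\Om|=0$, which the paper's lemma statement omits but which is indeed the standing hypothesis where the lemma is used; the paper's own proof relies on this in the same way without making it explicit.
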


\begin{proof}
 Take any $\k>h_\Om$, and let $E_{h_\Om}$ be a Cheeger set of $\Om$. By Proposition~\ref{prop:palla_interna}~(iii) there exists a ball $B$ of radius $1/h_\Om$ such that $B\subset E_{h_\Om}\subset \Om$. Hence, for all $\k>h_\Om$, one has that $\Om^{1/\k}$ contains at least a ball of radius $1/h_\Om - 1/\k$. \par
We now settle the case $\k = h_\Om$. By the first part we already know that $\Om^r\neq \emptyset$, because $E_{h_\Om}$ contains at least a ball of radius $r=h_\Om^{-1}$. Argue by contradiction and suppose that $\interior(\Om^r) =\emptyset$, i.e.~$\Om^{1/h_\Om}$ is, by Proposition~\ref{prop:struttura_diff}~(a), a (possibly constant) $\textrm{C}^{1,1}$ curve homeomorphic to a closed segment, thus $|\Om^{1/h_\Om}|=0$. This contradicts the inner Cheeger formula~\eqref{eq:inner_Cheeger_formula} which states $|\Om^{1/h_\Om}|=\pi h^{-2}_\Om$.
\end{proof}

\begin{rem}
Notice that in the proof of the above lemma we use that $\Om$ has no necks of radius $\k^{-1}$ only in the case $\k=h_\Om$. We believe that this is not necessary but we do not have an immediate proof of this fact. In any case, the assumption that $\Om$ is a Jordan domain cannot be avoided:~one needs it to apply Proposition~\ref{prop:palla_interna}~(iii). Moreover, in the case $\k=h_\Om$ the claim surely fails without such a hypothesis:~a counterexample is given by annuli, or more generally by curved annuli~\cite{KP11}.
\end{rem}

\begin{lem}\label{lem:reach}
Let $\Om$ be a Jordan domain with no necks of radius $r$, and assume that $\interior(\Om^r)\neq \emptyset$. Then, the compact sets
\[
C_t = \overline{\interior(\Om^r)} \cup \bigcup_{\g \in \Gamma^2_r} \g \cup \bigcup_{\g \in \G^1_r} \gamma([0,t])\,,\qquad t\in[0,1]\,,
\]
are such that $\reach(C_t)\ge r$. Moreover, they are simply connected.
\end{lem}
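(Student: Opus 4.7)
I plan to first establish the reach inequality $\reach(C_t)\ge r$ by a case analysis on points $p\in C_t$, exhibiting at each $p$ the outward supporting $r$-ball(s) required by the standard characterization of reach. Once this is in hand, simple connectedness of $C_t$ will follow from a short topological argument showing $\Om^r$ is simply connected and $C_t$ is a deformation retract of $\Om^r$.

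\textbf{Reach.} Arguing by contradiction, I will assume there exists $y\in\R^2$ with $d:=\dist(y,C_t)<r$ admitting two distinct projections $p_1,p_2\in C_t$. Setting $v_i=(y-p_i)/d$ and $y_i=p_i+rv_i$, one computes $|y-y_i|=r-d<r$ and $|y_i-p_i|=r$, so $y\in B_r(y_i)$. The core claim is that $B_r(y_i)\cap C_t=\emptyset$ for $i=1,2$, which I plan to verify through the following case analysis, based on Proposition~\ref{prop:struttura_diff} and the subsequent remark. If $p_i\in\interior_{\R^2}(C_t)$ then necessarily $y=p_i$, making the situation trivial. If $p_i\in\partial(\interior(\Om^r))$, the bilateral $r$-ball condition at such a boundary point of the inner parallel set (coming from $\dist(p_i,\partial\Om)=r$ together with a boundary projection $z\in\partial\Om$) forces $v_i$ to be the outward normal and places $y_i=z\in\partial\Om$; then every $q\in B_r(y_i)\cap\Om$ satisfies $\dist(q,\partial\Om)<r$, hence lies in $\Om\setminus\Om^r\subset\R^2\setminus C_t$. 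If $p_i=\g(s)$ for some $\g\in\Gamma^2_r$, or for $\g\in\Gamma^1_r$ with $s\in(0,t)$, the first-order optimality of $p_i$ forces $v_i$ to be a unit normal to $\g$ at $p_i$, so that $y_i$ equals one of the antipodal points $z^\pm_s\in\partial\Om$ provided by the strip $\mathcal{S}(\g)$ in~\eqref{def:strip}--\eqref{def:lateral_surface}, and the same distance argument as before gives $B_r(y_i)\cap C_t=\emptyset$. The remaining cases, namely the attachment points $p_i=\g(0)$ and the truncation tips $p_i=\g(t)$ for $\g\in\Gamma^1_r$, I plan to treat by combining the previous cases with a short $\textrm{C}^{1,1}$-continuity argument based on the half-ball $B^+_r(\g(t))$ introduced in~\eqref{eq:semipalla} and the unique-projection property of that half-ball established in the proof of Proposition~\ref{prop:struttura_diff}. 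Once the key claim is established, the open balls $B_r(y_1),B_r(y_2)$ are distinct, both contain $y$, and avoid $C_t\supseteq\overline{\interior(\Om^r)}$; invoking the no-necks assumption and the rolling-ball Lemma~\ref{lem:rollingball}, the points $p_1,p_2$ must be joined by a continuous curve in $\Om^r$, and tracing such a curve produces a point forced to lie in one of the two $r$-balls, a contradiction.

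\textbf{Simple connectedness.} I will first show that $\Om^r$ itself is simply connected by a direct argument: any Jordan loop $\sigma\subset\Om^r$ bounds a topological disk $D\subset\Om$ by simple connectedness of the Jordan domain $\Om$; if a point $q\in D\setminus\Om^r$ existed, one could pick $z\in\partial\Om$ with $|q-z|<r$, and the segment $[q,z]$, going from the interior of $D$ to a point outside $\Om$, must meet $\sigma$ at some $p$, giving $|p-z|\le|q-z|<r$, which contradicts $\dist(p,\partial\Om)\ge r$. Next I observe that $C_t$ is a deformation retract of $\Om^r$: from Proposition~\ref{prop:struttura_diff}(b)(v), $\Om^r\setminus C_t$ is the disjoint union of finitely many simple half-open arcs $\g((t,1])$ with $\g\in\Gamma^1_r$, each attached to $C_t$ only through the limit point $\g(t)\in C_t$; pushing each such arc back onto $\g(t)$ defines a continuous deformation retraction of $\Om^r$ onto $C_t$. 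Hence simple connectedness of $\Om^r$ transfers to $C_t$.

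\textbf{Main obstacle.} The most delicate step will be the supporting-ball verification in the reach argument at the non-smooth points of $C_t$, namely $p_i=\g(0)$ and $p_i=\g(t)$ with $\g\in\Gamma^1_r$, where locally $C_t$ fails to be a $\textrm{C}^{1,1}$-hypersurface and the two normal directions must be combined with the ``end'' of the curve. The plan is to exploit the explicit two antipodal projections of each $\g(s)$ onto $\partial\Om$ together with the half-ball $B^+_r$ appearing in~\eqref{eq:semipalla}, reproducing at the truncation tip the same contradiction argument that the authors employ in the proof of Proposition~\ref{prop:struttura_diff} to control $\g(1)$; if a direct construction becomes too delicate, I will perturb $y$ into a generic position so that its projections lie in smooth strata, and then pass to the limit using continuity of the projection map on a set of positive reach.
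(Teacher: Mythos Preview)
Your simple--connectedness argument is correct and in fact more explicit than the paper's one--line remark. The reach argument, however, has a genuine gap, and the final contradiction is misconceived.

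\medskip

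\textbf{The gap.} In the case $p_i\in\partial(\interior(\Om^r))$ you invoke a ``bilateral $r$-ball condition'' to force $v_i$ to be ``the outward normal'' and $y_i=z\in\partial\Om$. Neither step is justified. Proposition~\ref{prop:struttura_diff} yields two antipodal projections onto $\partial\Om$ only for points of $\Om^r\setminus\overline{\interior(\Om^r)}$, not for $\partial(\interior(\Om^r))$; the latter boundary need not be smooth, so there is no ``the'' outward normal. More importantly, the direction $v_i$ is a proximal normal to $C_t$ at $p_i$, and since $C_t\subsetneq\Om^r$ the proximal normal cone of $C_t$ can be strictly larger than that of $\Om^r$. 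There is no reason the nearest point to $y$ in $\Om^r$ should still be $p_i$ (it may lie on some $\g((t,1])\subset\Om^r\setminus C_t$), so you cannot borrow the supporting ball of $\Om^r$. A direct computation confirms the difficulty: from $|y-c|\ge d$ for all $c\in C_t$ one only gets $|c-p_i|^2\ge 2d\,(c-p_i)\cdot v_i$, i.e.\ a supporting ball of radius $d$, not $r$.

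The paper circumvents this by changing viewpoint: instead of classifying points $p_i\in C_t$, it partitions $A_t=C_t\oplus B_r$ into the strips $\mathcal{S}(\g_{|(0,t)})$, the half-balls $B^+_r(\g(t))$, and a residual region $D$. For $x\in D$ it invokes the known fact $\reach(\Om^r)\ge r$ (from \cite[Lemma~5.1]{LNS17}, i.e.\ the case $t=1$) to obtain a unique projection of $x$ onto $\Om^r$, and then shows this projection cannot lie on any $\g((0,1])$ with $\g\in\G^1_r$, hence falls in $C_0\subset C_t$. Your scheme never calls on $\reach(\Om^r)\ge r$, and without that input the bulk case stays open.

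\medskip

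\textbf{The contradiction.} Your closing step is also off: Lemma~\ref{lem:rollingball} is a statement about the maximal minimizer $E^M_\k$, not about $\Om^r$ or $C_t$, and the ``tracing a curve into one of the balls'' argument does not yield anything (if $y_i\in\partial\Om$ then $B_r(y_i)\cap\Om^r=\emptyset$, so no curve in $\Om^r$ can enter it). In fact no extra argument is needed: once $B_r(y_i)\cap C_t=\emptyset$ holds for a single $i$, the triangle inequality shows that every $c\in C_t$ with $|c-y|=d$ must equal $p_i$, so $p_1=p_2$ directly.
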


For the sake of completeness we recall the definition of reach for a closed set $A$, which was introduced in the seminal paper~\cite{FedererCM}. The reach of a closed set $A$ is
\[
\reach(A) = \sup \{ r : \forall x \in A\oplus B_r\,, \, x \text{ has a unique projection onto } A\}.
\]

\begin{proof}
Notice that for $t=1$ the claim corresponds to~\cite[Lemma~5.1]{LNS17}. To prove the claim we need to show that all the points in
\begin{equation}\label{eq:def_A_t}
A_t = \left(\overline{\interior(\Om^r)} \cup \bigcup_{\g \in \Gamma^2_r} \g \cup \bigcup_{\g \in \G^1_r} \gamma([0,t])  \right) \oplus B_r\,,
\end{equation}
have a unique projection on $C_t$, for all $t\in[0,1]$. Let $\bar t>0$ and $\g\in\G^1_r$ be fixed. First, consider any point $x$ in the strip $\mathcal{S}(\g_{|(0,\bar t)})$ defined as in~\eqref{def:strip}. We can split its boundary as $\de_L \mathcal{S}(\g_{|(0,\bar t)})) \cup [z_0^+,z_0^-] \cup [z_{\bar t}^+, z_{\bar t}^-]$, where $\de_L \mathcal{S}(\g_{|(0,\bar t)})$ is defined as in~\eqref{def:lateral_surface}, $z_t^\pm$ as in~\eqref{eq:ztpm}, and $[p,q]$ denotes the segment with endpoints $p$ and $q$. Argue by contradiction and suppose that $x$ has not a unique projection on $C_{\bar t}$. As it has unique projection on $C_1$, say $z$, one has $z=\widetilde \g(\tau)$ for some $\widetilde \g \in \G^1_r$ and $\tau > \bar t$.  Clearly all points on the segment $[x,z]$ project on $C_1$ onto $z$. If we show that this segment cannot cross $\de \mathcal{S}(\gamma)$ we get a contradiction. Trivially, the segment cannot cross the lateral boundary $\de_L \mathcal{S}(\g)$, as this is a subset of $\de E^M_\k\subset \de \Om$ and those points have distance $r$ from $C_1$.  Furthermore, since the balls $B_r(z_t^\pm)$ with $t=0,\bar t$ are disjoint from $C_1$, the segment $[x,z]$ cannot cross $[z_0^+,z_0^-]$ but in $\g(0)$ (equivalently, $[z_{\bar t}^+,z_{\bar t}^-]$ but in $\g(\bar t)$) which gives a contradiction. We remark as well that the points $\g(t)+\rho \nu(t)^\perp$ with $\rho < r$ project uniquely onto $\g(t)$, thanks to well-known properties of the strip (see~\cite{LP16}).\par

Second, consider $x$ in the open half-ball $B^+_r(\g(\bar t))$ defined in~\eqref{eq:semipalla}. Arguing as in the last part of the proof of Proposition~\ref{prop:struttura_diff} we find that $x$ projects uniquely on $\g(\bar t)$. Indeed, suppose that $x\in B^+_r(\gamma(\bar t))$ has a projection $z\in C_{\bar t}$ with $z\neq \g(\bar t)$. As $\Om$ has no necks of radius $r$ we find a simple curve $\sigma$ that runs from $\g(\bar t)$ to $z$. We claim that the loop obtained by concatenating $\sigma$ and the segments $[\g(\bar t), x]$, $[x, z]$ contains either $z^+_{\bar t}$ or $z^-_{\bar t}$ against the simple connectedness of $\Om$. This follows again by noticing that $\sigma$ cannot go across the open segments $[z^\pm_{\bar t}, \g(\bar t)]$ and cannot intersect any point on $\g((\bar t, 1])$ since none of these can be connected to $z$ without passing through $\g(\bar t)$.\par
Third, we are left with showing that the points in
\[
D=  A_0 \setminus \bigcup_{\gamma\in \G^1_r} B^+_r(\gamma(0)),
\]
have unique projection on $C_t$, for all $t$. Notice that  for all $\g \in \G^1_r$ the set $D$ is pairwise disjoint with:~(i)~the strip $\mathcal{S}(\g)$ defined in~\eqref{def:strip};~(ii)~the open half-ball $B^+_r(\g(1))$. Therefore, any $x\in D$ cannot be of the form
\begin{align}\label{eq:form_of_x}
  \begin{aligned}
   &\g(t)\pm \rho \nu(t)^\perp, \\       &\g(1)\pm \rho \nu,
  \end{aligned}
  &&
  \begin{aligned}
  &t\in (0,1),\rho \in [0,r), \\       &\nu: \nu \cdot \nu(1)\ge 0, \rho \in [0,r).
  \end{aligned}
\end{align}
Fix a point $x\in D$. As $D\subset C_1\oplus B_r$, $x$ has a unique projection $z$ on $(C_1\oplus B_r)^r=C_1$, and $\dist(x; C_1) < r$. We claim that $z\in C_0\subset C_1$, which would imply the uniqueness of the projection of $x$ on $C_t$, for all $t$. This is equivalent to say that $z\notin \bigcup_{\G^1_r}\g((0,1])$. By contradiction suppose that $z=\g(t)$ for some $\g\in \G^1_r$ and $t\in (0,1]$. Necessarily all points on the segment $[x, \g(t)]$ project on $\g(t)$. If $t<1$, this implies that the segment has direction $\nu(t)^\perp$ by orthogonality. If $t=1$, this implies that the segment has direction $\nu$ such that $\nu \cdot \nu(1) \ge 0$. Hence, as $\dist(x; C_1) < r$, $x$ is of the form given in~\eqref{eq:form_of_x}, against the assumption.\par
We are left with showing that $C_t$ is simply connected for all $t\in[0,1]$. As $\Om$ has no necks of radius $r$ and by the definition of $C_t$, we infer the path-connectedness of $C_t$. The simple connectedness is then a straightforward consequence of $\Om$ being a Jordan domain, thus simply connected.
\end{proof}

We are now ready to prove our main theorem.

\begin{proof}[Proof of Theorem~\ref{thm:main}]
As the proof of the structure of the maximal minimizer is substantially the same of the case $\k=h_\Om$ detailed in~\cite[Theorem~1.4]{LNS17}, we here only sketch it. Let $E^M_\k$ be the maximal minimizer. By Proposition~\ref{prop:palla_interna}~(iii) $E^M_\k$ contains a ball of radius $r=\k^{-1}$. The assumption of no necks of radius $r$ coupled with Lemma~\ref{lem:rollingball} gives the inclusion $E^M_\k \supseteq \Om^r \oplus B_r$.\par
To show the opposite inclusion one argues by contradiction. Yet, this part is much more technical and requires using tools such as the structure of the \emph{cut-locus} and the characterization of \emph{focal points}. Since these play no role in this article besides this part of the proof, we do not comment further and we simply refer the interested reader to the original proof for $\k=h_\Om$ available in~\cite[Theorem~1.4]{LNS17} which can be followed step by step.\par

Let us now discuss the structure of the minimal minimizer. We split the proof in three steps. By Lemma~\ref{lem:struttura_diff} and Proposition~\ref{prop:struttura_diff}, we know that $\Om^r \setminus \overline{\interior(\Om^r)}$ consists of the two (possibly empty) families $\G^1_r$ and $\G^2_r$ satisfying properties~(i)--(v) of Proposition~\ref{prop:struttura_diff}~(b). According to the notation introduced in Lemma~\ref{lem:reach} we denote by $A_t$ the set defined in~\eqref{eq:def_A_t}. Hence, we aim to prove that $A_0$ is the unique minimal minimizer of $\cF_\kappa$.\par
\emph{\underline{Step (i).}} For each $t\in[0,1]$ the set $A_t$ is a minimizer. Notice that for $t=1$, $A_1=E^M_\kappa$, thus the minimality is trivially true. According again to the notation and to the statement of Lemma~\ref{lem:reach}, $A_t=C_t\oplus B_r$ and the set $C_t$ is such that $\reach(C_t)\ge r$ and it is simply connected. Hence, by Steiner's formulas (see~\cite{FedererCM} and~\cite[Section~2.3]{LNS17}) we have
\begin{align*}
|A_t| = |C_t| + r\mathcal{M}_o(C_t) + \pi r^2\,, && P(A_t) = \mathcal{M}_o(C_t) + 2\pi r\,,
\end{align*}
where $\mathcal{M}_o(F)$ is the \emph{outer Minkowski content of $F$}, i.e.
\[
\mathcal{M}_o(F) = \lim_{r\to 0} \frac{|F\oplus B_r|-|F|}{r}\,.
\]
As $r=\kappa^{-1}$ and $|C_t|=|C_1|$ for all $t\in[0,1]$ it is immediate to check that $\cF_\k[A_t]=\cF_\k[E^M_\k]$, for all $t\in[0,1]$ which yields the claim.\par

\emph{\underline{Step (ii).}} Let $\k>h_\Om$. By Proposition~\ref{prop:min_max} the minimal minimizer is unique, thus we necessarily have  $A_0\supseteq E^m_\k$. Let us suppose that the inclusion is strict, and let $p\in A_0\setminus \overline{E^m_\k}$. By definition of $A_0$ we find $y$ either in $\overline{\interior(\Om^r)}$ or in $\g$ for some $\g \in \Gamma^2_r$, such that $p\in B_r(y)$. By Proposition~\ref{prop:palla_interna}~(iii), we find $z\in E^m_\k$ such that $B_r(z)\subset E^m_\k$. By the assumption of no necks of radius $r$, there is a $\textrm{C}^{1,1}$ curve $\sigma$ contained in $\Om^r$ such that  $\sigma(0)=z$ and $\sigma(1)=y$.\par
Let us denote by $t^*$ the last time for which $B_r(\sigma(t))\subset E^m_\k$ for all $t\le t^*$, which by hypothesis satisfies $t^*<1$. We claim that $\de E^m_\k \cap \Om$ contains the half-circle $S^+_r(\s(t^*))$ of length $\pi r$. To show this, let us fix $\e>0$ and take $t\in(t^*,1)$ sufficiently close to $t^*$, such that $B_r(\s(t))$ is not contained in $E^m_\k$. Therefore, the set $B_r(\s(t))\cap \de E^m_\k$ is nonempty, hence we can select $x_t\in B_r(\s(t))\cap \de E^m_\k$ minimizing the distance from $\s(t)$. Let $S_t$ be the connected component of $\de E^m_\k\cap \Om$ containing $x_t$, which is actually an arc of circle of radius $r$ with endpoints on $\de \Om$. Since the endpoints of $S_t$ lie outside $B_r(\s(t))\cup B_r(\s(t^*))$, and since the boundaries of these two balls have the same curvature as $S_t$, we conclude that the length of $S_t$ must be at least $\pi r -\e$, provided $t$ and $t^*$ are close enough.
Since $\de E^m_\k	\cap \Om$ has finitely many components of length greater than or equal to $\pi r -\e$, we find a sequence $t_n$ converging to $t^*$ such that $S=S_{t_n}$ is constant and intersects every ball $B_r(\s(t_n))$. Since $t_n$ converges to $t^*$, the distance of $S$ from $\de B_r(\s(t^*))$ is smaller than any positive constant, hence we conclude that $S$ is a half-circle contained in $\de B_r(\s(t^*))$. By construction, we get as well that $S=S_r^+(\s(t^*))$.\par

Consequently, we have $\s(t^*)= \g(\tau^*)$ for some $\g\in \G^2_r$ and $\tau^*\in [0,1]$. It is not restrictive to assume that $\dot{\g}(\tau^*) = \lambda\dot{\s}(t^*)$ for some $\lambda >0$. Pick a point $w$ arbitrarily close to $\g(1)$ in the connected component of $\interior (\Om^r)$ whose boundary contains $\g(1)$, and let $\widetilde \g:[0,1]\to \R^2$ be a $\textrm{C}^{1,1}$ curve contained in $\Om^r$ and connecting $\s(t^*)$ to $w$ (its existence is granted by the no necks assumption). We are now ready to define a one-parameter family of minimizers, by rolling balls along $\widetilde \g$, that is, by applying the same construction as in the proof of Lemma~\ref{lem:rollingball}), and by exploiting the fact that $S_r^*= S_r^+(\widetilde \g(0)) = S_r^+(\s(t^*))$ is contained in $\de E^m_\k \cap \Om$. \par
Let us define for $t\in [0,1]$ the sets
\begin{align*}
E_t = \bigcup_{s\in[0, t]} S_r^+(\widetilde \g(s)),
\end{align*} 
and 
\[
D_t= E^m_\k \cup  E_t. 
\]
If $\overline{E_t}$ and $\overline{E^m_\k}\setminus \overline{S_r^*}$ are disjoint, one has (see~\cite[Section~3]{LP16})
\begin{align*}
P(D_t) - P(E^m_\k) = 2\ell_t, && |D_t \setminus E^m_\k| = 2r\ell_t,
\end{align*}
where $\ell_t$ is the length of the curve $\widetilde \g$ restricted to $(0,t)$. Then, from the above formulas, $D_t$ is a minimizer as well. Let $\tilde{t}$ be the supremum of $t\in [0,1]$ such that $\overline{E_t}\cap \overline{E^m_\k}\setminus \overline{S_r^*}=\emptyset$. By the lower semicontinuity of the perimeter, the set $D_{\tilde{t}}$ is still a minimizer. If $\tilde{t}=1$, a contradiction follows immediately. Indeed $D_1$ would be a minimizer such that $S^+_r(\widetilde \g(1))\subset (\de D_{1}\cap \Om)$ and, at the same time, $\de B_r(\widetilde \g(1))\cap \de \Om = \emptyset$, which would contradict Proposition~\ref{prop:properties}(i). If $0\le\tilde{t}<1$, there exists another connected component $\Sigma$ of $\de E^m_\k\cap \Om$, such that its closure tangentially meets the closure of $S^+_r(\widetilde \g(\tilde{t}))$ at some point $z$. Now there are two possibilities: either $z\in\Om$, or $z\in \de \Om$. In the first case we obtain a contradiction with the minimality of $D_{\tilde{t}}$, again by Proposition~\ref{prop:properties}(i) (indeed, $z$ would represent a non-admissible singularity for $\de D_{\tilde{t}}\cap \Om$). In the second case, we can ``cut the cusp'' formed by the two connected components at $z$ and obtain a competitor $D'$ of $D_{\tilde{t}}$ such that $\cF_\k[D'] < \cF_\k[D_{\tilde{t}}]$, which is again a contradiction. This shows that $E^m_\k = A_0$.\par

\underline{Step (iii).}
Let now $\k=h_\Om$; just as before one sees that $A_0$ is a minimal minimizer, which we shall now denote by $E^m_\k$. We need to show that it is the unique one. Let $F^m_\k$ be another minimal minimizer, i.e.~$F^m_\k \cap E^m_\k$ is empty. By Proposition~\ref{prop:palla_interna}~(iii), there exist two balls of radius $r=\k^{-1}$, $B_1 \subset E^m_\k$ and $B_2 \subset F^m_\k$. The assumption of no necks of radius $r$ grants us the existence of a $\textrm{C}^{1,1}$ curve $\widetilde \g$ in $\Om^r$ from the center of $B_1$ to that of $B_2$.  Arguing as in Step~(ii), we could construct a minimizer with a singular point in the interior boundary, which is again a contradiction.
\end{proof}

\begin{rem}\label{rem:multiple_families}
In Step~(i) of the above proof, we show that we have a one-parameter family of minimizers $\{A_t\}_t$ which ``interpolates'' between $E^m_\kappa$ and $E^M_\kappa$. Notice that this is not the only way to ``grow'' $E^m_\kappa$ into $E^M_\kappa$ but there are infinitely many as soon as $\#\G^1_r >1$. Labelling the curves $\g\in \G^1_r$ with indexes $1,\dots, n$, we let ${\boldsymbol \theta}(t) = (\theta_1(t),\dots, \theta_n(t))$ with $\theta_i(t)$ nondecreasing, surjective functions of $t$ from $[0,1]$ in $[0,1]$. Then, one can define the multi-parameter family $\{A_{{\boldsymbol \theta}(t)}\}_t$ for $t\in[0,1]$ as
\[
A_{{\boldsymbol \theta}(t)} = \left(\overline{\interior(\Om^r)} \cup \bigcup_{\g \in \Gamma^2_r} \g \cup \bigcup_{i=1}^n \gamma_i([0,\theta_i(t)])  \right) \oplus B_r,
\]
and check that these are all minimizers, by reasoning as in the proof of Lemma~\ref{lem:reach} and Step~(i) of the proof of Theorem~\ref{thm:main}.
\end{rem}

\begin{cor}\label{cor:improvement}
Let $\Om$ be a Jordan domain with $|\de \Om|=0$, and let $r$ be the unique positive solution of $\pi \rho^2 = |\Om^\rho|$. If $\Om$ has no necks of radius $r$, then, $h_\Om = r^{-1}$.
\end{cor}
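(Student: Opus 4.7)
The plan is to set $\k = r^{-1}$ and prove $\k = h_\Om$ by establishing the two inequalities separately, taking advantage of the identity $\pi r^2 = |\Om^r|$ and of Theorem~\ref{thm:main}.

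I would first verify that $h_\Om \le \k$ by exhibiting an admissible set with ratio $r^{-1}$, the natural choice being $E = \Om^r \oplus B_r$. To apply Steiner's formulas to $\Om^r$ I need $\reach(\Om^r) \ge r$. Since $|\Om^r| = \pi r^2 > 0$, Proposition~\ref{prop:struttura_diff}(a) rules out $\interior(\Om^r) = \emptyset$ (in that case $\Om^r$ would be either a point or a $\textrm{C}^{1,1}$ curve, hence of zero measure). Thus Lemma~\ref{lem:reach} applies with $t=1$, and since part~(v) of Proposition~\ref{prop:struttura_diff} gives $C_1 = \Om^r$, one obtains $\reach(\Om^r) \ge r$. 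Steiner's formulas then yield
\[
|E| = |\Om^r| + r\,\mathcal{M}_o(\Om^r) + \pi r^2 = r\bigl(2\pi r + \mathcal{M}_o(\Om^r)\bigr), \qquad P(E) = \mathcal{M}_o(\Om^r) + 2\pi r,
\]
so that $P(E)/|E| = r^{-1} = \k$, which gives $h_\Om \le \k$.

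To close the loop I would invoke Theorem~\ref{thm:main}: since $\k \ge h_\Om$ and $\Om$ has no necks of radius $\k^{-1} = r$, the theorem identifies $E^M_\k = \Om^r \oplus B_r$, and consequently
\[
\min \cF_\k = \cF_\k[E^M_\k] = P(E) - \k |E| = 0.
\]
The dichotomy recalled in the Introduction asserts that $\k > h_\Om$ would force $\min \cF_\k < 0$; hence $\k \le h_\Om$, and combined with $h_\Om \le \k$ we conclude $h_\Om = \k = r^{-1}$. The only delicate point is the reach bound on $\Om^r$, which turns out to be a short consequence of the no-necks hypothesis via Proposition~\ref{prop:struttura_diff} and Lemma~\ref{lem:reach}; everything else is a direct computation together with an appeal to the structure theorem.
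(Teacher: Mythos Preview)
Your proof is correct and follows essentially the same route as the paper: establish $h_\Om \le r^{-1}$ via Steiner's formulas applied to $\Om^r\oplus B_r$ using the reach bound on $\Om^r$, then rule out $h_\Om < r^{-1}$ by invoking Theorem~\ref{thm:main} and the fact that $\min\cF_\k<0$ for $\k>h_\Om$. The only cosmetic difference is that the paper obtains $\reach(\Om^r)\ge r$ by citing \cite[Lemma~5.1]{LNS17} directly (after noting that $\Om^r$ is path-connected and simply connected), whereas you route through Proposition~\ref{prop:struttura_diff} and Lemma~\ref{lem:reach} with $t=1$; since the paper itself remarks that the $t=1$ case of Lemma~\ref{lem:reach} coincides with that external lemma, the two arguments are equivalent.
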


\begin{proof}
We start noticing that there is a unique positive $r$ such that  the equality $\pi r^2 = |\Om^r|$ holds. This immediately follows from the fact that $\pi \rho^2$ is continuous and strictly increasing, while $|\Om^\rho|$ is continuous and decreasing. By hypothesis $\Om$ has no necks of radius $r$, and therefore $\Omega^r$ is path-connected. Moreover, as $\Om$ is simply connected, it is easy to see that $\Om^r$ is as well. Recall that by~\cite[Lemma~5.1]{LNS17} this implies that $\Om^r$ has reach at least $r$. Therefore, by Steiner's formulas we have
\[
P(\Om^r \oplus B_r) = \mathcal{M}_o(\Om^r) + 2\pi r, \qquad  |\Om^r \oplus B_r|= |\Om^r| + r \mathcal{M}_o(\Om^r) + \pi r^2.
\]
The hypothesis $|\Om^r|=\pi r^2$, paired with the above equalities, implies that
\begin{equation}\label{eq:contradiction_h=k}
P(\Om^r \oplus B_r) - \frac 1r |\Om^r \oplus B_r|=0.
\end{equation}
Therefore, the Cheeger constant of $\Om$ is bounded from above by $r^{-1}$. By Theorem~\ref{thm:main}, we immediately find that the set $\Om^r \oplus B_r$ minimizes the prescribed curvature functional $\mathcal{F}_{r^{-1}}$. Then, argue by contradiction and suppose that $h_\Om<r^{-1}$. As $\min \mathcal{F}_\k<0$ for $\k>h_\Om$, we get
\[
P(\Om^r \oplus B_r) - \frac{1}{r} |\Om^r \oplus B_r| < 0,
\]
against~\eqref{eq:contradiction_h=k}.
\end{proof}

\begin{cor}\label{cor:nestedness}
Let $\Om$ be a Jordan domain such that  $|\de \Om|=0$ and let $\k_2>\k_1\ge h_\Om$. If $\Om$ has no necks of radius $\k_2^{-1}$ and $\k_1^{-1}$, then one has
\[
E^M_{\k_2} \supseteq E^m_{\k_2}  \supseteq E^M_{\k_1} \supseteq E^m_{\k_1}.
\]
\end{cor}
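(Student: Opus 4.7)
The outer inclusions $E^M_{\k_i}\supseteq E^m_{\k_i}$ for $i=1,2$ are immediate from Definition~\ref{def:min_max}, so the entire content of the corollary is the central inclusion $E^m_{\k_2}\supseteq E^M_{\k_1}$. Writing $r_i=\k_i^{-1}$, so that $r_2<r_1$, the no-neck assumption at both radii lets me invoke Theorem~\ref{thm:main} at $\k_1$ and at $\k_2$, and the central inclusion rewrites as
\[
\Om^{r_1}\oplus B_{r_1}\ \subseteq\ C\oplus B_{r_2},\qquad C:=\overline{\interior(\Om^{r_2})}\cup\bigcup_{\g\in\G^2_{r_2}}\g.
\]
The plan is to prove this by producing, for every $x$ in the left-hand set, an explicit ``shifted centre'' $y'\in C$ with $|x-y'|\le r_2$.

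Given $x$ with $|x-y|\le r_1$ for some $y\in\Om^{r_1}$, I first observe that $\dist(y,\de\Om)\ge r_1>r_2$ forces $B_{r_1-r_2}(y)\subseteq\Om^{r_2}$ by the triangle inequality, hence $y\in\interior(\Om^{r_2})$. If $|x-y|\le r_2$, I take $y'=y$ and there is nothing more to do; otherwise, setting $d=|x-y|\in(r_2,r_1]$, I slide $y$ toward $x$ by defining $y':=y+\tfrac{d-r_2}{d}(x-y)$, so that $|x-y'|=r_2$ and $|y-y'|=d-r_2$. A straight estimate along the segment $p_s:=y+s(y'-y)$ gives
\[
\dist(p_s,\de\Om)\ \ge\ r_1-s(d-r_2)\ >\ r_2\qquad\text{for all }s\in[0,1),
\]
so $p_s\in\interior(\Om^{r_2})$; letting $s\to1$ exhibits $y'$ as an accumulation point of $\interior(\Om^{r_2})$, hence $y'\in\overline{\interior(\Om^{r_2})}\subseteq C$. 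Combined with $|x-y'|\le r_2$ this gives $x\in C\oplus\overline{B_{r_2}}=E^m_{\k_2}$ (up to the standard null-set identification of the Minkowski sum).

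The one delicate point I expect is precisely the last step of the construction. A priori, $y'$ could be a point of $\Om^{r_2}\setminus C$, i.e.~sit on a tendril $\g\in\G^1_{r_2}$, which would break the argument; it is the sliding-along-the-segment construction that rules this out, because the open segment $[y,y')$ lies strictly inside $\interior(\Om^{r_2})$ and therefore forces $y'\in\overline{\interior(\Om^{r_2})}$. This is the single place where the full strength of Proposition~\ref{prop:struttura_diff} enters, via its characterization of $\Om^{r_2}\setminus\overline{\interior(\Om^{r_2})}$ as the union of the $\G^1_{r_2}$- and $\G^2_{r_2}$-curves.
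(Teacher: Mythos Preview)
Your argument is correct and is essentially the paper's proof with the key step spelled out: the paper tersely asserts that $\Om^{r_1}\subset\overline{\interior(\Om^{r_2})}$ implies $\Om^{r_1}\oplus B_{r_1}\subset\overline{\interior(\Om^{r_2})}\oplus B_{r_2}$ (implicitly via $\Om^{r_1}\oplus B_{r_1-r_2}\subset\overline{\interior(\Om^{r_2})}$ and associativity of the Minkowski sum), and your sliding-centre construction is precisely a pointwise verification of that inclusion. One minor remark: contrary to your closing paragraph, Proposition~\ref{prop:struttura_diff} is never actually used in your proof --- the segment estimate $\dist(p_s,\de\Om)>r_2$ for $s\in[0,1)$ already forces $y'\in\overline{\interior(\Om^{r_2})}$ directly, with no need to analyse the structure of $\Om^{r_2}\setminus\overline{\interior(\Om^{r_2})}$.
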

\begin{proof}
Let $r_i = \k_i^{-1}$ for $i=1,2$. Since $r_2<r_1$, the set $\overline{\interior(\Om^{r_2})}$ contains $\Om^{r_1}$, hence $\overline{\interior(\Om^{r_2})}\oplus B_{r_2}$ contains $\Om^{r_1}\oplus B_{r_1}$. Then the proof directly follows from Theorem~\ref{thm:main}.
\end{proof}

\begin{rem}\label{rem:nestedness}
Notice that the set inclusion $E^m_{\k_2}  \supseteq E^M_{\k_1}$ is strict as soon as $|\Om|>|E^M_{\k_1}|$. Indeed, this strict volume bound implies that $\de E^M_{\k_1} \cap \Om$ is not empty. Assume by contradiction that $E^m_{\k_2} = E^M_{\k_1}$. Then, we infer that the interior boundary $\de E^M_{\k_1} \cap \Om = \de E^M_{\k_2} \cap \Om$, which is not empty, must have curvature equal to both $\k_1$ and $\k_2$, which is not possible.
\end{rem}

\begin{rem}\label{rem:uniqueness_no_necks_r}
If one assumes that $\Om$ is a Jordan domain with  $|\de \Om|=0$ and that has no necks of radius $r$ for all $r$, then the solution of~\eqref{eq:pmc} is unique for almost every $\k\ge h_\Om$, i.e.~there are at most countably many $\k$ for which uniqueness does not hold. For the sake of completeness, we remark that this is equivalent to say that there are at most countably many $r\le \inr(\Om)$ such that $\Gamma^1_r$ is not empty. To see this, let us set
\[
V_\k = |E^M_\k| - |E^m_\k| = |E^M_\k \setminus E^m_\k|.
\]
For all $\k$ such that uniqueness does not hold, one has $V_\k >0$. At the same time Corollary~\ref{cor:nestedness} implies that for $\k_2>\k_1$ the sets $E^M_{\k_1} \setminus E^m_{\k_1}$ and $E^M_{\k_2} \setminus E^m_{\k_2}$ are pairwise disjoint. Therefore, there are at most countably many $\k$ such that $V_\k >0$.  An example of set admitting countably many values $\k$ such that uniqueness fails is the ``ziggurat'' in Figure~\ref{fig:ziggurat}, built as follows. First, let $Q$ be the square $Q=[-2^{-1}, 2^{-1}]\times[0,1]$ and let $f(n)$ be the sequence
\begin{align*}
f(1)= \frac 12, && f(n)=\frac 12 + \sum_{i=2}^n \frac{1}{2^{i-1}}, \quad \forall n>1.
\end{align*}
Let then $Q^+_n$ and $Q^-_n$ be the squares
\begin{align*}
Q^+_n &=  [f(n), f(n+1)]\times [0, 2^{-n}], \\
Q^-_n &= [-f(n+1), -f(n)]\times [0, 2^{-n}].
\end{align*}
We define the ziggurat as (the interior of)
\[
Q\cup \bigcup_{n\ge 1} (Q^+_n \cup Q^-_n).
\] 
The resulting set has no necks of radius $r$ for all $r\le \inr(\Om)$ and it is such that $\G^1_r\neq \emptyset$ whenever $r=2^{-n-1}$ for any $n\in \N$. Therefore, uniqueness of minimizers of $\cF_\kappa$ fails whenever $\k=2^{n+1}\ge h_\Om$.\par
Similar examples are given by suitable fractals, e.g.~by a \emph{square Koch snowflake}, i.e.~the set obtained replacing the sides of a unit square with suitable quadratic type $1$ Koch curves (e.g.~by iteratively replacing each middle $n$-th part of a segment with a square, with $n>3$).
\end{rem}

\begin{figure}[t]
\centering
\includegraphics[trim={0 3cm 0 3cm},clip, width=.75\textwidth]{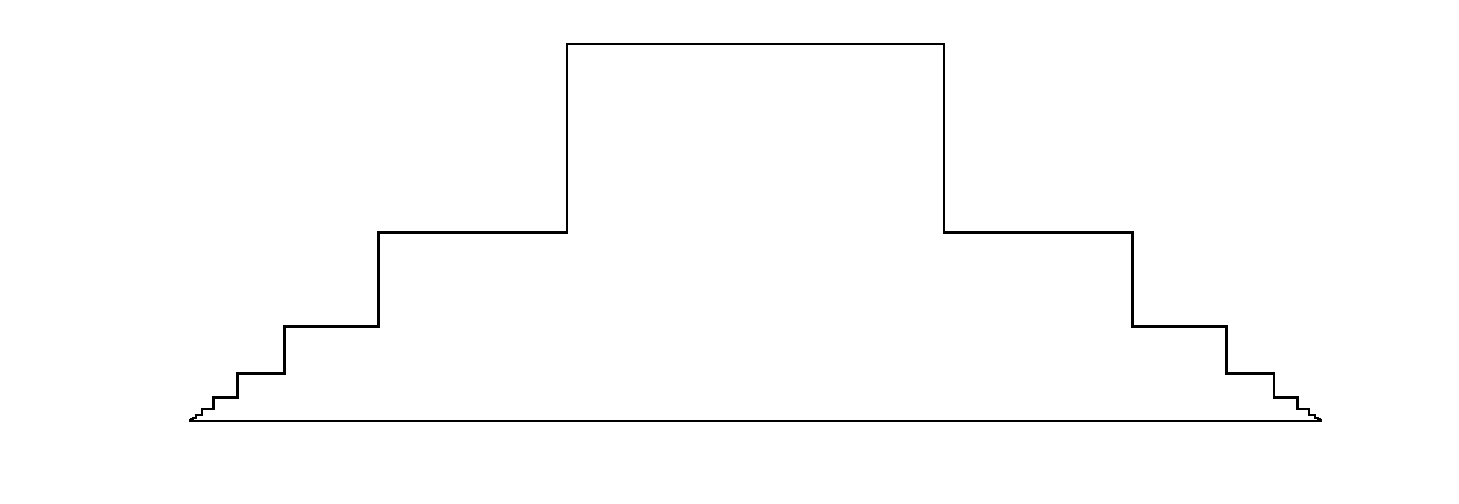} 
\caption{A ziggurat:~a set with no necks of radius $r$ for all $r$ such that it has infinitely (uncountably) many solutions to the prescribed curvature equation relative to infinitely (countably) many values of $\k$.}
\label{fig:ziggurat}
\end{figure}

\section{Proof of Theorem~\ref{thm:m_iso}}\label{sec:iso}

In this section we exploit Theorem~\ref{thm:main} to describe minimizers of the isoperimetric profile $\mathcal{J}$, relatively to a Jordan domain $\Om$, with $|\de \Om|=0$ and such that it has no necks of radius $r$ for all $r\le h_\Om^{-1}$. In particular we shall show that for any volume $V$ greater than or equal to $|E^m_{h_\Om}|$, there exists a suitable $\k$ and a suitable minimizer $E_\k$ of $\cF_\k$ such that $|E_\k|=V$, hence $\mathcal{J}(V) = P(E_\k)$. As a consequence, we are able to prove that for that class of $\Om$ the isoperimetric profile is convex for $V\ge |E^m_{h_\Om}|$, by showing that it is the Legendre transform of $\mathcal{G}\colon \k \mapsto -\min \cF_\k$, defined for $\k \ge h_\Om$. Trivially, $\mathcal J$ (resp.~$\mathcal{J}^2$) is concave (resp.~convex) for $V\le |B_R|$, where $R=\inr(\Om)$; it would be of interest managing to prove that $\mathcal{J}^2$ is convex on the whole range $[0, |\Om|]$, which up to our knowledge has not been addressed when considering the total perimeter $P(E)$. For the sake of completeness, we recall that the square of the \emph{relative} isoperimetric profile (i.e.~of the infimum of the relative perimeter $P(E;\Om)$ under volume constraint $V$) is known to be concave in convex bodies (see~\cite{Kuw03} and~\cite{LRV18}).\par

In order to prove such a theorem we need first the following technical lemma which ensures the semicontinuity of the outer Minkowski content of $\Om^r$ and $\overline{\interior(\Om^r)}$, whenever $\Om$ has no necks of radius $r$ for all $r<R$, for a fixed $R>0$.

\begin{lem}\label{lem:upper_lower_minkowski}
Let $R>0$ be fixed and let $\Om$ be a Jordan domain with no necks of radius $r$ for all $r<R$. Then the functions
\begin{align*}
m(r) = \mathcal{M}_o(\Om^r),\qquad
\mu(r) = \mathcal{M}_o(\overline{\interior(\Om^r)})
\end{align*}
are, respectively, upper semicontinuous and lower semicontinuous on $(0,R)$.
\end{lem}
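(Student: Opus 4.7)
The plan is to represent both $m(r)$ and $\mu(r)$ as tube volumes via a Steiner-type identity, and then pass monotone set-limits through. As preparation, I would first observe that Remark~\ref{rem:nogammadue} forces $\Gamma^2_r=\emptyset$ for every $r\in(0,R)$, so Proposition~\ref{prop:struttura_diff}(b) tells us that $\Om^r\setminus\overline{\interior(\Om^r)}$ is a finite union of $C^{1,1}$ tendrils belonging to $\Gamma^1_r$. Applying Lemma~\ref{lem:reach} at $t=1$ and $t=0$ then shows that both $\Om^r$ and $\overline{\interior(\Om^r)}$ are compact, simply connected, and have reach at least $r$, so Federer's tube formula in $\R^2$ (exactly as invoked in Step (i) of the proof of Theorem~\ref{thm:main}) gives, for every $\e\in(0,r]$,
\begin{equation*}
|\Om^r\oplus B_\e|=|\Om^r|+\e\,m(r)+\pi\e^2,\qquad |\overline{\interior(\Om^r)}\oplus B_\e|=|\Om^r|+\e\,\mu(r)+\pi\e^2,
\end{equation*}
where I used $|\overline{\interior(\Om^r)}|=|\Om^r|$ since the two sets differ by the $\mathcal{H}^2$-negligible tendrils in $\Gamma^1_r$. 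In particular, $m(r)-\mu(r)=\e^{-1}(|\Om^r\oplus B_\e|-|\overline{\interior(\Om^r)}\oplus B_\e|)\ge 0$.

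Next I would fix $r_0\in(0,R)$ and any $\e\in(0,r_0)$, so that the representations above are valid for all $r$ close to $r_0$ with the same $\e$. The computation of the one-sided limits reduces to four set convergences. As $r\to r_0^-$, trivially $\Om^r\searrow\Om^{r_0}$; the key non-trivial limit is $\bigcap_{r<r_0}\overline{\interior(\Om^r)}=\Om^{r_0}$. The $\subseteq$ inclusion is clear, and for the reverse inclusion I would argue that each tendril $\gamma\in\Gamma^1_{r_0}$ sits at the axis of the strip $\mathcal{S}(\gamma)$ defined in~\eqref{def:strip}, whose lateral boundary belongs to $\de\Om$; consequently, for every $r<r_0$ the sub-strip of half-width $r_0-r$ around $\gamma$ is an open subset of $\Om^r$, which forces $\gamma\subseteq\overline{\interior(\Om^r)}$. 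As $r\to r_0^+$ one has instead $\bigcup_{r>r_0}\Om^r=\interior(\Om^{r_0})=\{\dist(\cdot,\de\Om)>r_0\}$, and a similar sandwich yields $\interior(\Om^{r_0})\subseteq \bigcup_{r>r_0}\overline{\interior(\Om^r)}\subseteq\overline{\interior(\Om^{r_0})}$, which suffices modulo an $\mathcal{H}^2$-null set. Combining these set-convergences with the continuity $|\Om^r|\to|\Om^{r_0}|$ (immediate from $|\Gamma^1_{r_0}|=0$) via dominated convergence, I obtain
\begin{align*}
\lim_{r\to r_0^-} m(r)&=m(r_0), & \lim_{r\to r_0^+} m(r)&=\mu(r_0),\\
\lim_{r\to r_0^-}\mu(r)&=m(r_0), & \lim_{r\to r_0^+}\mu(r)&=\mu(r_0).
\end{align*}

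Since $\mu(r_0)\le m(r_0)$ always holds, these four identities immediately give the upper semicontinuity of $m$ and the lower semicontinuity of $\mu$ at $r_0$. The main obstacle is precisely the identity $\bigcap_{r<r_0}\overline{\interior(\Om^r)}=\Om^{r_0}$: it encodes the fact that tendrils appearing at the critical radius $r_0$ are Hausdorff limits of genuinely two-dimensional thin strips existing for all smaller radii, and this is where the no-necks assumption just below $r_0$ enters essentially (to ensure that the strip structure around $\gamma$ persists for every $r$ slightly less than $r_0$, via Proposition~\ref{prop:struttura_diff} applied at $r$). The degenerate case $\interior(\Om^{r_0})=\emptyset$ poses no additional difficulty: then $\Om^{r_0}$ is either a single point or a $C^{1,1}$ arc by Proposition~\ref{prop:struttura_diff}(a), $\mu$ vanishes in a left neighborhood of $r_0$, and the Steiner formula still applies with the Euler-characteristic term vanishing for the empty set.
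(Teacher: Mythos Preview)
Your argument is correct and in fact proves more than the paper does: you obtain the precise one-sided limits of both $m$ and $\mu$, whereas the paper only establishes the semicontinuity inequalities. The approaches diverge mainly for $\mu$. The paper identifies $\mu(r)=\mathcal{H}^1(\partial W^r)=P(W^r)$ via Federer's Lipschitz projection onto sets of positive reach, and then invokes the $L^1$-continuity of $r\mapsto W^r$ together with the lower semicontinuity of the perimeter. You bypass this machinery entirely by treating $m$ and $\mu$ symmetrically through the Steiner identity and monotone set-limits, which is more elementary and yields the sharper statement that $m$ is left-continuous and $\mu$ is right-continuous, with the respective opposite-side limits equal to $\mu(r_0)$ and $m(r_0)$. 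One small remark: the inclusion you flag as the ``main obstacle'', namely $\Omega^{r_0}\subseteq\overline{\interior(\Omega^r)}$ for $r<r_0$, is actually immediate without any reference to tendrils or strips, since every $x\in\Omega^{r_0}$ satisfies $\dist(x,\partial\Omega)\ge r_0>r$ and hence lies in the open set $\{\dist(\cdot,\partial\Omega)>r\}=\interior(\Omega^r)$; the no-necks hypothesis enters only through the reach property needed for Steiner's formula, not through this set identity.
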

\begin{proof}
We assume without loss of generality that $\Om^r$ is not empty for all $0<r<R$. By~\cite[Lemma~5.1]{LNS17} we know that $\reach(\Om^r) \ge r$. Moreover, $\Om^r$ is simply connected, hence by Steiner's formulas we have for all $0<\e<r$
\[
|\Om^r\oplus B_\e| = |\Om^r| + \e m(r) + \pi \e^2,
\]
whence
\begin{equation}\label{eq:formulam}
m(r) = \frac{|\Om^r\oplus B_\e| - |\Om^r|}{\e} + \pi \e.
\end{equation}
Fix now $r_0\in (0,R)$ and $0<\e<r_0$. Thanks to~\eqref{eq:formulam}, the upper semicontinuity of $m(r)$ at $r_0$ follows as soon as we prove that 
\[
\limsup_{r\to r_0} \alpha_\e(r,r_0) + \beta(r,r_0) \le 0,
\]
where we have set
\[
\alpha_\e(r,r_0) = |\Om^r\oplus B_\e| - |\Om^{r_0} \oplus B_\e|,\qquad 
\beta(r,r_0) = \alpha_0(r,r_0) = |\Om^r| - |\Om^{r_0}|.
\]
The fact that $\limsup_{r\to r_0} \beta(r,r_0) \le 0$ immediately follows from the following simple observations. First, as $r\to r_0^-$, we have $\Om^r\supset \Om^{r_0}$ and $|\Om^r\setminus \Om^{r_0}|\to 0$, so that in particular $|\Om^r|\to |\Om^{r_0}|$. Second, as $r\to r_0^+$, we have $\Om^r\subset \Om^{r_0}$ and thus $|\Om^r|\le |\Om^{r_0}|$. We are left with showing $\limsup_{r\to r_0} \alpha_\e(r,r_0) \le 0$. We first consider the case of the left upper limit, i.e.~when $r\to r_0^-$. In this case we have $\Om^{r_0}\oplus B_\e \subset \Om^r \oplus B_\e$ by monotonicity. Moreover,
\begin{equation}\label{eq:limOmreps}
\lim_{r\uparrow r_0}\, (\Om^r\oplus B_\e)\setminus (\Om^{r_0}\oplus B_\e) = \bigcap_{r<r_0} \big(\Om^r\oplus B_\e\big)\, \setminus (\Om^{r_0}\oplus B_\e) 
\subset \de (\Om^{r_0}\oplus B_\e),
\end{equation}
where to prove the last inclusion one can rely on the fact that $\Om^r$ converges to $\Om^{r_0}$ w.r.t.~the Hausdorff distance as $r\to r_0^-$. Since  $\e< r_0$ the set $\de(\Om^{r_0}\oplus B_\e)$ is Lipschitz, thus it has zero Lebesgue measure. Hence by~\eqref{eq:limOmreps}, we find
\[
\limsup_{r\to r_0^-} \alpha_\e(r,r_0) \le |\de(\Om^{r_0}\oplus B_\e)| = 0.
\]
Concerning the right upper limit, i.e.~when $r\to r_0^+$, we simply observe that $\alpha_\e(r,r_0)\le 0$ whenever $r>r_0$ by monotonicity, hence a fortiori we obtain the desired $\limsup$ inequality. This completes the proof of the upper semicontinuity of $m(r)$.\par

We now set $W^r = \overline{\interior(\Om^r)}$. By Remark~\ref{rem:nogammadue} one has $\G^2_r=\emptyset$, thus by Lemma~\ref{lem:reach} we know that $W^r$ is simply connected and $\reach(W^r)\ge r$. If we denote by $\xi: \overline{W^r\oplus B_{r/2}} \to W^r$ the unique projection map onto $W^r$ (which is well defined by the reach property of $W^r$), we infer by~\cite[Theorem~4.8]{FedererCM} that the restriction of $\xi$ to the boundary of $W^r\oplus B_{r/2}$ is $2$-Lipschitz and its image is the boundary of $W^r$. This shows that $\de W^r$ is the Lipschitz image of a smooth curve with finite length (indeed, the boundary of $W^r\oplus B_{r/2}$ is of class $\textrm{C}^{1,1}$ with bounded curvature). Consequently, we have that $\mu(r) = \H^1(\de W^r) = P(W^r)$, where the first equality follows from~\cite[Section~3.2.39]{FedererBOOK} and the second from the fact that $\de W^r$ is a continuous curve. At the same time, the mapping $r\mapsto W^r$ is continuous with respect to the $L^1$-topology, which can be proved by observing that $W^r$ is Lebesgue equivalent to both $\interior(\Om^r)$ (a consequence of $\H^1(\de W^r)<+\infty$) and $\Om^r$ (a consequence of Proposition~\ref{prop:struttura_diff}). We thus conclude that $\mu(r)$ is lower semicontinuous on $(0,R)$.
\end{proof}

We are now ready to prove the result concerning the minimizers of the isoperimetric profile. Since we know by Theorem~\ref{thm:main} the structure of $E^m_\k$ and of $E^M_\k$, it is easy to show that there is a family of minimizers with volumes spanning the above range. Actually, this has already been done in Step~(i) of Theorem~\ref{thm:main}, where we proved that the sets
\[
A_t = \left(\overline{\interior(\Om^r)} \cup \bigcup_{\gamma \in \Gamma^2_r} \gamma \cup \bigcup_{\gamma \in \G^1_r} \gamma([0,t])  \right) \oplus B_r\,,
\]
with $r=\k^{-1}$, are minimizers. It is straightforward that $|A_t|$ is a continuous function in $t$ from $[0,1]$ to $[|E^m_\k|, |E^M_\k|]$. Thus, the proof boils down to show that there are no volume gaps when increasing the value of $\k$, and this is exactly what the previous lemma is needed for.

\begin{proof}[Proof of Theorem~\ref{thm:m_iso}]
We can directly assume that $|\Om|>|E^m_{h_\Om}|$, otherwise there is nothing to prove. If $V$ equals either $|\Om|$ or $|E^m_{h_\Om}|$, there is nothing to prove. Then, let $|\Om|>V>|E^m_{h_\Om}|$ and set $\k^*$ and $\k_*$ as follows
\[
\k^*=\inf\{\,\k: |E^M_\k|>V\,\}, \qquad \qquad \k_*=\sup\{\,\k: |E^m_\k|<V\,\}.
\]
Since the functions $\k\mapsto |E^M_\k|$ and $\k\mapsto |E^m_\k|$ are nondecreasing, and for all $\k$ one has $|E^M_\k|\ge |E^m_\k|$, the inequality $\k^*\ge \k_*$ obviously holds. We first show that these two values agree. Suppose that $\k^*>\k_*$. Then, for any $\k \in (\k_*, \k^*)$ one can consider the minimizers of $\cF_\k$. On the one hand, the lower bound $\k > \k_*$ implies $|E^m_\k| \ge V$, while the upper bound $\k < \k^*$ implies $|E^M_\k| \le V$. As $|E^M_\k| \le |E^m_\k|$, we immediately find that $|E^M_\k|=|E^m_\k|=V$ for all $\k \in (\k_*, \k^*)$. The strict nestedness granted by Corollary~\ref{cor:nestedness} and Remark~\ref{rem:nestedness} immediately yields a contradiction.\par

Hence, $\k^*=\k_*=\hat \k$.  Setting $\hat r=1/\hat \k$, by Steiner's formulas we have that
\begin{align*}
|E^M_{\hat \k}|=\pi {\hat r}^2 + {\hat r}\mathcal{M}_o(\Om^{\hat r}) + |\Om^{\hat r}|\,, && |E^m_{\hat \k}|=\pi {\hat r}^2 + {\hat r}\mathcal{M}_o(\overline{\interior(\Om^{\hat r})}) + |\overline{\interior(\Om^{\hat r})}|\,.
\end{align*}
Therefore, by Lemma~\ref{lem:upper_lower_minkowski} we get that
\[
|E^M_{\hat\k}| \ge V\ge |E^m_{\hat \k}|\,.
\]
If either one of the two inequalities is not strict, we are done. Hence, suppose that both are strict. This implies that $\Gamma^1_{\hat r}$ is not empty. The sets $A_t$ in~\eqref{eq:def_A_t} give a family of minimizers with volume increasing continuously from $|E^m_{\hat \k}|$ up to  $|E^M_{\hat \k}|$, thus one finds a suitable $t$ such that $|A_t|=V$, as required.
\end{proof}

\begin{prop}\label{prop:legendre}
Let $\Om$ be a Jordan domain such that  $|\de \Om|=0$. Assume that $\Om$ has no necks of radius $r$, for all $r\le h_\Om^{-1}$. Then, the isoperimetric profile $\mathcal J$ for $V\ge |E^m_{h_\Om}|$ is the Legendre transform of $\mathcal{G}\colon \k\mapsto -\min\cF_\k$ defined for $\k \ge h_\Om$.
\end{prop}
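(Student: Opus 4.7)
The plan is to verify the two inequalities defining the Legendre transform
\[
\mathcal{G}^{*}(V) \;=\; \sup_{\k \ge h_\Om}\bigl\{\,\k V - \mathcal{G}(\k)\,\bigr\} \;=\; \sup_{\k \ge h_\Om}\bigl\{\,\k V + \min \cF_\k\,\bigr\}\,,
\]
and check that $\mathcal{G}$ really is a convex function on $[h_\Om,+\infty)$ (so that the Legendre transform is the natural object). Convexity of $\mathcal{G}$ is immediate from the representation
\[
\mathcal{G}(\k) \;=\; -\min \cF_\k \;=\; \sup_{F\subseteq \Om}\bigl\{\,\k|F|-P(F)\,\bigr\}\,,
\]
which exhibits $\mathcal{G}$ as a supremum of affine functions of $\k$.

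For the easy inequality $\mathcal{J}(V)\ge \mathcal{G}^{*}(V)$, fix $V\ge |E^m_{h_\Om}|$, any $\k\ge h_\Om$, and any admissible set $F\subset \Om$ with $|F|=V$. By the very definition of minimizer one has
\[
P(F)-\k V \;\ge\; \min\cF_\k \;=\; -\mathcal{G}(\k)\,,
\]
so $P(F)\ge \k V-\mathcal{G}(\k)$. Taking the infimum over $F$ with $|F|=V$ gives $\mathcal{J}(V)\ge \k V-\mathcal{G}(\k)$, and taking the supremum over $\k\ge h_\Om$ yields $\mathcal{J}(V)\ge \mathcal{G}^{*}(V)$.

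The reverse inequality $\mathcal{J}(V)\le \mathcal{G}^{*}(V)$ is where Theorem~\ref{thm:m_iso} does all the work. Apply that theorem to get some $\hat\k\in[h_\Om,+\infty)$ and a minimizer $E_{\hat\k}$ of $\cF_{\hat\k}$ with $|E_{\hat\k}|=V$ and $P(E_{\hat\k})=\mathcal{J}(V)$. Then, directly from the identity $\cF_{\hat\k}[E_{\hat\k}]=\min\cF_{\hat\k}$,
\[
\mathcal{J}(V) \;=\; P(E_{\hat\k}) \;=\; \hat\k\,|E_{\hat\k}|+\min\cF_{\hat\k} \;=\; \hat\k V - \mathcal{G}(\hat\k) \;\le\; \mathcal{G}^{*}(V)\,.
\]
Combining the two bounds, $\mathcal{J}(V)=\mathcal{G}^{*}(V)$ for every $V\ge |E^m_{h_\Om}|$, which is the claim.

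There is essentially no technical obstacle: the whole content of the statement has been packed into Theorem~\ref{thm:m_iso}, which supplies the specific $\hat\k$ at which the supremum in $\mathcal{G}^{*}(V)$ is attained. Without that theorem one would only obtain the inequality $\mathcal{J}\ge \mathcal{G}^{*}$, which is just a restatement of weak duality; the nontrivial point is that strong duality holds thanks to the existence, for every admissible volume, of a prescribed-curvature minimizer realizing that volume.
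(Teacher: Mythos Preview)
Your argument is correct and follows essentially the same route as the paper: prove convexity of $\mathcal{G}$, establish the easy weak-duality inequality, and then invoke Theorem~\ref{thm:m_iso} to produce a $\hat\k$ at which the supremum is attained. The only cosmetic differences are that you obtain convexity of $\mathcal{G}$ from its representation as a supremum of affine functions (the paper checks the midpoint inequality directly), and you prove the bound $\mathcal{J}(V)\ge \mathcal{G}^*(V)$ using an arbitrary competitor $F$ with $|F|=V$, whereas the paper plugs in the specific $E_{\bar\k}$ from Theorem~\ref{thm:m_iso} in both directions; your version makes it slightly clearer that this inequality is the trivial one.
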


\begin{proof}
First, we prove that the map $\mathcal{G}$ is convex. Set $\k_1, \k_2 \ge h_\Om$ and consider the convex combination $\hat \k = t\k_1 +(1-t)\k_2$. As usual we denote by $E_\k$ a minimizer of $\cF_\k$. Then, one has
\begin{align*}
\mathcal{G}(t\k_1 +(1-t)\k_2) &= \mathcal{G}(\hat \k) = -\min \cF_{\hat \k} = -P(E_{\hat \k}) +\hat \k|E_{\hat \k}|\\
&=-t(P(E_{\hat \k}) - \k_1|E_{\hat \k}|) -(1-t)(P(E_{\hat \k}) - \k_2|E_{\hat \k}|)\\
&\le -t \min \cF_{\k_1} -(1-t)\min \cF_{\k_2}\\
&= t\mathcal{G}(\k_1) +(1-t)\mathcal{G}(\k_2).
\end{align*}
Therefore, one can consider the Legendre transform of $\mathcal{G}$. By definition we have
\[
\mathcal{G}^*(V) = \sup_{\k \ge h} \{\,\k V -\mathcal{G}(\k)\,\} = \sup_{\k \ge h} \big\{\,\k V +\min\{P(E)-\k|E|\,\} \big\}\,.
\]
By Theorem~\ref{thm:m_iso} for all $V\ge |E^m_{h_\Om}|$ there exist $\bar\k\ge h_\Om$ and $E_{\bar \k}$ minimizer of $\cF_{\bar \k}$ with $|E_{\bar \k}|=V$ and such that $\mathcal{J} (V)= P(E_{\bar \k})$. Hence, on the one hand
\begin{align*}
\mathcal{G}^*(V) &\ge \bar \k V + \min\{\,P(E)-\bar \k|E|\,\} \\
&= \bar \k V + P(E_{\bar \k}) - \bar \k |E_{\bar \k}|=  P(E_{\bar \k}) = \mathcal{J} (V).
\end{align*}
On the other hand, for all $\k$ we have
\[
\k V -\mathcal{G}(\k) = \k V +\min \cF_\k \le \k V + P(E_{\bar \k}) -\k |E_{\bar \k}| = P(E_{\bar \k}).
\]
Thus,
\begin{align*}
\mathcal{G}^*(V) &\le \sup_{\k \ge h} \{\,\k V + P(E_{\bar \k})- \k|E_{\bar \k}|\,\} \\
&= \sup_{\k \ge h} \{\,P(E_{\bar \k})\,\}=  P(E_{\bar \k}) = \mathcal{J} (V),
\end{align*}
and the claim follows at once.
\end{proof}

Since the Legendre transform maps convex functions in convex functions, one has the following corollary.

\begin{cor}\label{cor:convexity}
Let $\Om$ be a Jordan domain such that  $|\de \Om|=0$. If $\Om$ has no necks of radius $r$ for all $r\in (0, h_\Om^{-1}]$, then the isoperimetric profile $\mathcal J$ is convex in $[|E^m_{h_\Om}|, |\Om|]$.
\end{cor}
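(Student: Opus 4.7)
The plan is to deduce the corollary almost immediately from Proposition~\ref{prop:legendre}, using only the elementary fact that any Legendre transform is convex. No further geometric input is required beyond what has already been established.

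First I would invoke Proposition~\ref{prop:legendre}, which under the standing hypotheses identifies $\mathcal{J}(V)$ with $\mathcal{G}^*(V)$ on the range $[|E^m_{h_\Omega}|, |\Omega|]$, where
\[
\mathcal{G}^*(V) = \sup_{\kappa \ge h_\Omega}\bigl\{\kappa V - \mathcal{G}(\kappa)\bigr\}.
\]
For each fixed $\kappa \ge h_\Omega$, the map $V \mapsto \kappa V - \mathcal{G}(\kappa)$ is affine, and a pointwise supremum of an arbitrary family of affine functions is convex (and in fact lower semicontinuous). Hence $\mathcal{G}^*$ is convex on the whole real line, in particular on $[|E^m_{h_\Omega}|, |\Omega|]$.

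Combining the two observations yields that $\mathcal{J}$ coincides on $[|E^m_{h_\Omega}|, |\Omega|]$ with a convex function, and therefore is itself convex on that interval. The only mild subtlety worth flagging is that Proposition~\ref{prop:legendre} gives the Legendre identification precisely on the interval claimed in the corollary, so there is no issue regarding the upper endpoint $V = |\Omega|$. Since everything else is a standard fact about Legendre transforms, I do not anticipate any genuine obstacle in this argument; it is essentially a one-line consequence of the preceding proposition.
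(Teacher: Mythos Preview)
Your argument is correct and matches the paper's own justification, which simply notes that the Legendre transform of $\mathcal{G}$ is convex and hence so is $\mathcal{J}$ on the given interval. The only cosmetic difference is that you phrase the convexity of $\mathcal{G}^*$ via ``supremum of affine functions'' rather than ``Legendre transform of a convex function'', which is the same elementary fact and does not require the convexity of $\mathcal{G}$ established in Proposition~\ref{prop:legendre}.
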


\begin{rem}
Notice that whenever $\G^1_r\neq \emptyset$, $\mathcal J$ is linear in the interval of volumes delimited by $|E^m_{r^{-1}}|$ and $|E^M_{r^{-1}}|$. There are sets with no necks of radius $r$ for all $r$ that display such linear growth on countably many intervals of smaller and smaller size. Think of the ziggurat described in Remark~\ref{rem:uniqueness_no_necks_r} and shown in Figure~\ref{fig:ziggurat}. Moreover, Remark~\ref{rem:multiple_families} shows that nestedness of minimizers of the isoperimetric profile is not ensured --- even though a nested family can always be chosen. This is achieved, for instance, by interpolating between $E^m_\k$ and $E^M_\k$, always through the family $\{A_t\}_t$ defined in~\eqref{eq:def_A_t}.
\end{rem}

\section*{Conflict of Interest}
The authors declare that they have no conflict of interest.

\section*{Acknowledgements}
The authors would like to thank Hugo Lavenant, Manuel Ritor\'e, and Aldo Pratelli for fruitful conversations and for their comments on a preliminary version of the paper.

\bibliographystyle{plainurl}

\bibliography{pmc_no_necks}

\end{document}